\newcommand{\bydef}{:=}
\newcommand{\Skew}{\mathrm{Skew}}
\newcommand{\id}{\mathrm{id}}
\newcommand{\trace}{\mathrm{tr}}
\newcommand{\cA}{\mathcal{A}}
\newcommand{\cB}{\mathcal{B}} 
\newcommand{\cC}{\mathcal{C}}
\newcommand{\cK}{\mathcal{K}}
\newcommand{\cL}{\mathcal{L}}
\newcommand{\cQ}{\mathcal{Q}} 
\newcommand{\cS}{\mathcal{S}} 
\newcommand{\cU}{\mathcal{U}} 
\newcommand{\cV}{\mathcal{V}}
\newcommand{\frg}{{\mathfrak g}}
\newcommand{\frd}{{\mathfrak d}}
\newcommand{\ZZ}{\mathbb{Z}}
\newcommand{\FF}{\mathbb{F}} 
\DeclareMathOperator{\Hom}{\mathrm{Hom}}
\DeclareMathOperator{\End}{\mathrm{End}}
\DeclareMathOperator{\Der}{\mathrm{Der}}
\newcommand{\ad}{\mathrm{ad}}
\newcommand{\Ad}{\mathrm{Ad}}
\newcommand{\frsl}{{\mathfrak{sl}}}
\newcommand{\frso}{{\mathfrak{so}}}
\newcommand{\frpsl}{{\mathfrak{psl}}}
\newcommand{\frgl}{{\mathfrak{gl}}}
\newcommand{\frpgl}{{\mathfrak{pgl}}}
\newcommand{\frosp}{{\mathfrak{osp}}}
\newcommand{\tri}{\mathfrak{tri}}
\providecommand{\espan}[1]{\text{span}\left\{ #1\right\}}
\newcommand{\subo}{_{\bar 0}} 
\newcommand{\subuno}{_{\bar 1}}
\newcommand{\boldbup}{\textup{\textbf{b}}}
\newcommand{\nup}{\textup{n}}
\newcommand{\boldnup}{\textup{\textbf{n}}}
\newcommand{\Repe}{\mathsf{Rep\,C}_3}
\newcommand{\sVec}{\mathsf{sVec}}
\newcommand{\balpha}{\boldsymbol{\alpha}}
\newcommand{\Repa}{\mathsf{Rep}\,\balpha_3}
\newcommand{\Ver}{\mathsf{Ver}_3}
\newtheorem{theorem}{Theorem}[section]
\newtheorem{proposition}[theorem]{Proposition}
\newtheorem{lemma}[theorem]{Lemma}
\newtheorem{corollary}[theorem]{Corollary}
\newtheorem{properties}[theorem]{Properties}
\newtheorem{recipe}[theorem]{Recipe}
\theoremstyle{definition}
\theoremstyle{remark} \newtheorem{remark}[theorem]{Remark}
\numberwithin{equation}{section}
\def\bigstrut{\vrule height 12pt width 0ptdepth 2pt}
\def\hregleta{\hrule height .5pt}
\def\hreglon{\hrule height 1pt}
\def\vreglon{\vrule height 12pt width1pt depth 4pt}
\def\vregleta{\vrule width .5pt}
\def\hreglonfill{\leaders\hreglon\hfill}
\def\hregletafill{\leaders\hregleta\hfill}
\begin{document}

\title{From octonions to composition superalgebras 
via tensor categories}

\author[A.~Daza-Garc{\'\i}a]{Alberto Daza-Garc{\'\i}a}
\address[A.\,D., A.\,E.]{Departamento de
Matem\'{a}ticas e Instituto Universitario de Matem\'aticas y
Aplicaciones, Universidad de Zaragoza, 50009 Zaragoza, Spain}
\email{albertodg@unizar.es, elduque@unizar.es} 
\thanks{The first two authors are supported by grant
MTM2017-83506-C2-1-P (AEI/FEDER, UE), 
by grant PID2021-123461NB-C21, funded by MCIN/AEI/10.13039/501100011033 and by
 ``ERDF A way of making Europe'',
 and by grant E22\_20R (Gobierno de 
Arag\'on, Grupo de investigaci\'on ``\'Algebra
y Geometr{\'\i}a'').\\
The first author also acknowledges support by the F.P.I. 
grant PRE2018-087018.}

\author[A.~Elduque]{Alberto Elduque} 

\author[U.~Sayin]{Umut Sayin}
\address[U.~S]{Department of Mathematics, D\"uzce University,
81620 Konuralp, D\"uzce, Turkey}
\email{umutsayin@duzce.edu.tr }
\thanks{The third author is supported by grant T\"UB{\.I}TAK 2219}

\subjclass[2020]{Primary 17A75; Secondary 17A70; 18M15}

\keywords{Octonions; superalgebras; tensor category; semisimplification; Verlinde category.}


\begin{abstract}
The nontrivial unital composition superalgebras, of dimension $3$ and $6$, which 
exist only in characteristic $3$, are obtained from the split Cayley algebra and its order $3$ automorphisms, by means of the process of semisimplification of the symmetric tensor category of representations of the cyclic group of order $3$.
Connections with the extended Freudenthal Magic Square in characteristic $3$, that
contains some exceptional Lie superalgebras specific of this characteristic are discussed too.

In the process, precise recipes to go from (nonassociative) algebras in this tensor category to the corresponding superalgebras are given.
\end{abstract}

\maketitle

\section{Introduction}\label{se:intro}

In \cite{Eld06}, Lie algebras $\frg$ over a field $\FF$ that admit a 
$\ZZ/2$-grading such that the even part is the direct sum of $\frsl_2(\FF)$ 
and another ideal $\frd$, and its odd part is, as a module for the even part, a tensor product of the two-dimensional natural module for $\frsl_2(\FF)$ and a module $T$ for $\frd$, were considered. Thus, we have 
\begin{equation}\label{eq:STS}
\frg=\bigl(\frsl_2(\FF)\oplus\frd\bigr)\oplus \bigl(\FF^2\otimes T).
\end{equation}
In this case, $T$ becomes a so-called \emph{symplectic triple system}, and the invariance of the Lie bracket under the action of $\frsl_2(\FF)$ forces the bracket of odd elements to present the following form:
\[
[u\otimes x,v\otimes y]=(x\mid y)\gamma_{u,v}+\langle u\mid v\rangle d_{x,y}
\]
for all $u,v\in \FF^2$ and $x,y\in T$, for a skew-symmetric bilinear form $(\cdot\mid\cdot)$ on $T$ and a symmetric bilinear map $T\times T\rightarrow \frd$, 
$(x,y)\mapsto d_{x,y}$; where $\langle u\mid v\rangle$ is, up to scalars, the 
unique $\frsl_2(\FF)$-invariant bilinear form on $\FF^2$, and 
$\gamma_{u,v}=\langle u\mid \cdot \rangle v+\langle v\mid\cdot \rangle u$.

All classical simple Lie algebras can be obtained in this way.

But the main point raised in \cite{Eld06} was that, in case the characteristic of 
$\FF$ is $3$, then the $\ZZ/2$-graded vector space $\frd\oplus T$, with bracket given by the bracket in $\frd$, the action of $\frd$ in $T$, and by 
$[x,y]=d_{x,y}$ for 
$x,y\in T$, endows $\frd\oplus T$ with a structure of Lie superalgebra. This remark gave the construction of a family of new simple contragredient simple Lie superalgebras specific of characteristic $3$. Another family of such simple 
Lie superalgebras was obtained in \cite{Eld06} by means of simple orthogonal 
triple systems, and most of these new simple Lie superalgebras appeared in a unified way in the Extended Freudenthal Magic Square in
\cite{CunhaElduque}. (See also \cite{BGL}.)

\smallskip

Quite recently \cite{Kannan}, Arun S.~Kannan considered a much more general and surprising way of passing from Lie algebras to Lie superalgebras, obtaining the simple Lie superalgebras mentioned above in a quite combinatorial way. Another exceptional Lie superalgebra specific of characteristic $5$, first obtained in \cite{Eld07}, is obtained too by Kannan, using a variation of his method in characteristic $5$.

Kannan considered, over fields of characteristic $3$, exceptional simple Lie algebras endowed with a nilpotent derivation $d$ with $d^3=0$. This allows to 
view the Lie algebra as a Lie algebra in the category $\Repa$ of representations of the affine group scheme $\balpha_3: R\mapsto \{r\in R\mid r^3=0\}$ (the kernel of the Frobenius endomorphism of the additive group scheme $\mathbb{G}_a$). The \emph{semisimplification} of $\Repa$ is the Verlinde category $\Ver$, which is equivalent to the category of vector superspaces.  In this way, a path is obtained from Lie algebras in $\Repa$ to Lie superalgebras.

For Lie algebras as in \eqref{eq:STS}, we may choose $d$ to be the adjoint action by $\left(\begin{smallmatrix} 0&1\\ 0&0\end{smallmatrix}\right)$. In this case, the ideal 
$\frsl_2(\FF)$ constitutes a Jordan block of length $3$ for $d$. The ideal $\frd$ is annihilated by $d$, and the odd part $\FF^2\otimes T$ is a direct sum of Jordan blocks of length $2$, as $d$ is nilpotent of order $2$ on $\FF^2$. The semisimplification process in \cite{Kannan} returns precisely the Lie superalgebra 
$\frd\oplus T$ above.

\smallskip

In this paper, we want to concentrate on another feature in characteristic $3$. 
Only over fields of this characteristic there are nontrivial 
\emph{composition superalgebras} (see \cite{EldOkubo}). 
Our goal is to obtain the two unital composition superalgebras with nontrivial odd part: $B(1,2)$ and $B(4,2)$, from the split Cayley algebra by the process 
of semisimplification. It must be remarked that these 
composition superalgebras appeared for the first time in Shestakov's work on 
prime alternative superalgebras \cite{Shestakov}.
Actually, we will not semisimplify from $\Repa$ as in \cite{Kannan}, 
but from the category $\Repe$ 
of representations of the cyclic group of order $3$ (or equivalenty, from the category of representations of the constant group scheme $\mathsf{C}_3$). 
In other words, instead of considering algebras with a nilpotent derivation $d$ 
with $d^3=0$, we consider algebras endowed with an automorphism of order $3$. 
The semisimplification of $\Repe$ is again the Verlinde category $\Ver$.

\smallskip

The paper is organized as follows. Section \ref{se:alg_super} will review the needed results from the categories mentioned above. Our basic reference for monoidal and tensor categories will be \cite{EGNO}. Concise recipes will be given to describe 
the superalgebra obtained from an algebra in $\Repe$ by semisimplification. 
Section \ref{se:octonions} will be devoted to considering composition algebras in a symmetric tensor category, to reviewing the known results on order $3$ automorphisms of the Cayley algebras over fields of charactetistic $3$, and to using the recipes in the previous section in order to obtain $B(1,2)$ and $B(4,2)$ from the split Cayley algebra.  Section \ref{se:ss} will be devoted to showing how this process of semisimplification behaves with respect to algebras of derivations or of skew transformations relative to a nondegenerate symmetric bilinear form.  Also, the Extended Freudenthal Magic Square in \cite{CunhaElduque} is built in terms of composition superalgebras, and it will be shown in the last section how the work in Section \ref{se:octonions} can be used to obtain the Lie superalgebras in the extended square  by semisimplification from the algebras in the last row of the classical Freudenthal Magic Square, in a way different from the one considered in \cite{Kannan}.  That is, semisimplification provides a bridge between the classical Freudenthal Magic Square and its extended version.

\smallskip

Throughout the paper, $\FF$ will denote a ground field. All vector spaces will be assumed to be finite-dimensional over $\FF$ and unadorned tensor products will be over $\FF$. Most of the time, the characteristic of $\FF$ will be $3$.

\section{From algebras to superalgebras}\label{se:alg_super}

This section will review, in a way suitable for our purposes, known results on the
categories $\Repe$, $\Ver$, and $\sVec$. For details, the reader may consult
\cite{EK21,Ost15,EO19} and references there in.

Throughout this section, the characteristic of the ground field $\FF$ will always be $3$.

\subsection{Semisimplification of \texorpdfstring{$\Repe$}{RepC3}}\label{ss:semiRep}

The category $\Repe$, whose objects are the finite-dimensional representations of the finite group $\mathsf{C}_3$ over $\FF$  or, equivalently, of the corresponding
constant group scheme, and whose morphisms are the equivariant homomorphisms, is a symmetric tensor category, with the usual tensor product of vector spaces and the 
braiding given by the usual swap: $X\otimes Y\rightarrow Y\otimes X$, 
$x\otimes y\mapsto y\otimes x$. 

Fix a generator $\sigma$ of $\mathsf{C}_3$. $\Repe$ is not a semisimple category. The indecomposable objects are, up to isomorphism, $V_0=\FF$, $V_1=\FF v_0+\FF v_1$, and $V_2=\FF w_0+\FF w_1+\FF w_2$, where the action of $\sigma$ is trivial on $V_0$, 
and $\sigma(v_0)=v_0+v_1$, $\sigma(v_1)=v_1$; $\sigma(w_0)=w_0+w_1$, 
$\sigma(w_1)=w_1+w_2$, $\sigma(w_2)=w_2$. Any object $\cA$ in $\Repe$ decomposes,
nonuniquely, as
\begin{equation}\label{eq:A0A1A2}
\cA=\cA_0\oplus \cA_1\oplus \cA_2,
\end{equation}
where $\cA_i$ is a direct sum of copies of $V_i$, $i=0,1,2$.

A homomorphism $f\in\Hom_{\Repe}(X,Y)$ is said to be \emph{negligible} if for all homomorphisms $g\in\Hom_{\Repe}(Y,X)$, $\trace(fg)=0$ holds. Denote by 
$\mathrm{N}(X,Y)$ the subspace of negligible homomorphisms in 
$\Hom_{\Repe}(X,Y)$. 

For instance, $\End_{\Repe}(V_1)$ consists of those endomorphisms of $V_1$ which commute with
$\sigma$. Any such endomorphism $f$ satisfies $f(v_0)=\alpha v_0+\beta v_1$ and 
$f(v_1)=\alpha v_1$ for scalars $\alpha,\beta\in\FF$, so that $f=\alpha\id_{V_1}+g$ for a nilpotent endomorphism $g$. It follows that $\mathrm{N}(V_1,V_1)$ consists of the nilpotent endomorphisms
in $\End_{\Repe}(V_1)$. For $V_2$, any $f\in \End_{\Repe}(V_2)$ is again of the form 
$\alpha\id_{V_2}+g$ for a nilpotent endomorphism, but now $\trace(\id_{V_2})=0$ as the characteristic of $\FF$ is $3$, and it turns out that $\End_{\Repe}(V_2)$ consists 
entirely of negligible endomorphisms.

\smallskip

Negligible homomorphisms form a \emph{tensor ideal} and this allows us to define the 
\emph{semisimplification} of $\Repe$, which is the Verlinde category $\Ver$, whose objects are the objects of $\Repe$, but whose morphisms are given by
\[
\Hom_{\Ver}(X,Y)\bydef \Hom_{\Repe}(X,Y)/\mathrm{N}(X,Y).
\]
This is again a symmetric tensor category, with the tensor product in $\Repe$, and the braiding induced by the one in $\Repe$.

Denote by $[f]$ the class of $f\in\Hom_{\Repe}(X,Y)$ modulo 
$\mathrm{N}(X,Y)$. Note that the identity morphism in $\End_{\Ver}(X)$ is 
$[\id_{X}]$, where $\id_{X}$ denotes the identity morphism in $\Repe$ (the identity map). We have thus obtained the \emph{semisimplification} functor:
\begin{equation}\label{eq:S}
\begin{split}
S:\Repe&\longrightarrow \Ver\\
 X&\mapsto X \ \text{for objects,}\\
 f&\mapsto [f]\ \text{for morphisms.}
\end{split}
\end{equation}
The semisimplification functor $S$ is $\FF$-linear and braided monoidal (see \cite[Definitions 1.2.3 and 8.1.7]{EGNO}).

Some straightforward consequences of the definitions are recalled here:

\begin{properties}\label{properties}
\begin{itemize}
\item $\End_{\Ver}(V_i)=\FF[\id_{V_i}]\neq 0$ for $i=0,1$, $\End_{\Ver}(V_2)=0$,
$\Hom_{\Ver}(V_i,V_j)=0$ for $i\neq j$.

\item $V_0$ and $V_1$ are simple objects in $\Ver$, while $V_2$ is isomorphic to $0$.

\item $\Ver$ is semisimple: any object is isomorphic to a direct sum of copies of $V_0$ and $V_1$.

\item $V_0\otimes V_i$ and $V_i\otimes V_0$ are isomorphic to $V_i$, for $i=0,1$, both in $\Repe$ and in 
$\Ver$; while $V_1\otimes V_1$ is isomorphic to $V_0$ in 
$\Ver$.

Actually, in $\Repe$, $V_1\otimes V_1$ is the direct sum of its submodule of symmetric tensors, which is isomorphic to $V_2$, and its one-dimensional submodule of skew-symmetric tensors, which is isomorphic to $V_0$. An explicit isomorphism 
$V_1\otimes V_1\rightarrow V_0$ in $\Ver$ is $[\lambda]$ where 
$\lambda:V_1\otimes V_1\rightarrow V_0$ is the homomorphism in $\Repe$ given by:
\begin{equation}\label{eq:lambda}
v_0\otimes v_0\mapsto 0,\ v_1\otimes v_1\mapsto 0,\ 
v_0\otimes v_1\mapsto 1,\ v_1\otimes v_0\mapsto -1.
\end{equation}
Its inverse is $[\lambda']$, where $\lambda':V_0\rightarrow V_1\otimes V_1$ is
the homomorphism in $\Repe$ that takes $1$ to 
$\frac{1}{2}(v_0\otimes v_1-v_1\otimes v_0)$.

\item The braiding in $\Ver$, for objects $X,Y$, is given by $[c_{X,Y}]$, 
where $c_{X,Y}$ is the braiding in $\Repe$ (i.e., the swap $x\otimes y\mapsto y\otimes x$). Then, identifying 
$V_0\otimes V_0\simeq V_0$,
$V_0\otimes V_1\simeq V_1\simeq V_1\otimes V_0$, and $V_1\otimes V_1\simeq V_0$ as above, we have $[c_{V_0,V_0}]=[\id_{V_0}]$, 
$[c_{V_0,V_1}]=[\id_{V_1}]=[c_{V_1,V_0}]$, but  $[c_{V_1,V_1}]=-[\id_{V_0}]$, because 
we have
\[
c_{V_1,V_1}\left(\frac{1}{2}(v_0\otimes v_1-v_1\otimes v_0)\right)=
\frac{1}{2}(v_1\otimes v_0-v_0\otimes v_1)
=-\frac{1}{2}(v_0\otimes v_1-v_1\otimes v_0).
\]

\end{itemize}
\end{properties}

\bigskip

\subsection{Equivalence of \texorpdfstring{$\Ver$}{Ver} and 
\texorpdfstring{$\sVec$}{sVec}}\label{ss:Ver_sVec}

This equivalence is well known, but  concrete formulas for these equivalence will be
needed later on, and hence this will be reviewed in some detail.

The objects of the category $\sVec$ of vector superspaces (over our ground field 
$\FF$) are the $\ZZ/2$-graded vector spaces $X=X\subo\oplus X\subuno$, and the morphisms $f:X\rightarrow Y$ are the linear maps preserving this grading: $f(X\subo)\subseteq Y\subo$, $f(X\subuno)\subseteq Y\subuno$. We will write 
$f=f\subo\oplus f\subuno$, with $f_{\bar a}:X_{\bar a}\rightarrow Y_{\bar a}$ given by the restriction of $f$, $a=0,1$. This is a symmetric tensor category, with the braiding given by the \emph{parity swap}: 
\[
X\otimes Y\rightarrow Y\otimes X, \quad
x\otimes y\mapsto (-1)^{xy}y\otimes x,
\] 
for homogeneous elements $x,y$, where $(-1)^{xy}$ is 
$-1$ if both $x$ and $y$ are odd, and it is $1$ otherwise.

The $\FF$-linear functor given on objects and morphisms by
\begin{equation}\label{eq:F}
\begin{split}
F:\sVec&\longrightarrow \Ver\\
X\subo\oplus X\subuno &\mapsto X\subo\oplus(X\subuno\otimes V_1)\\
f\subo\oplus f\subuno&\mapsto [f\subo\oplus(f\subuno\otimes\id_{V_1})],
\end{split}
\end{equation}
is an equivalence of categories. Here the action of $\mathsf{C}_3$ on 
$X\subo\oplus (X\subuno\otimes V_1)$ is given by the action on $V_1$. That is, 
$X\subo$ is a trivial module for $\mathsf{C}_3$, while 
$\sigma(x\subuno\otimes v)\bydef x\subuno\otimes \sigma(v)$, for all 
$x\subuno\in X\subuno$ and $v\in V_1$.

$F$ is a monoidal functor with natural isomorphism 
$J:F(\cdot)\otimes F(\cdot)\rightarrow F(\cdot\otimes\cdot)$ given by 
$J_{X,Y}=[j_{X,Y}]$, where $j_{X,Y}$ is the morphism in $\Repe$ defined as follows,
for $X=X\subo\oplus X\subuno$ and $Y=Y\subo\oplus Y\subuno$:
\begin{equation}\label{eq:J}
\begin{split}
j_{X,Y}:\Bigl(X\subo\oplus(X\subuno\otimes V_1)\Bigr)\otimes 
\Bigl(Y\subo\oplus(Y\subuno\otimes V_1)\Bigr)&\longrightarrow\\
&\hspace*{-60pt}(X\subo\otimes Y\subo\oplus X\subuno\otimes Y\subuno)\oplus
\bigl((X\subo\otimes Y\subuno\oplus X\subuno\otimes Y\subo)\otimes V_1\bigr)\\
x\subo\otimes y\subo&\mapsto x\subo\otimes y\subo,\\
x\subo\otimes (y\subuno\otimes v)&\mapsto (x\subo\otimes y\subuno)\otimes v,\\
(x\subuno\otimes v)\otimes y\subo&\mapsto (x\subuno\otimes y\subo)\otimes v,\\
(x\subuno\otimes u)\otimes (y\subuno\otimes v)&\mapsto 
\lambda(u\otimes v)x\subuno\otimes y\subuno,
\end{split}
\end{equation}
for all $x\subo\in X\subo$, $x\subuno\in X\subuno$, $y\subo\in Y\subo$, $y\subuno\in Y\subuno$, and $u,v\in V_1$, where $\lambda$ is given in \eqref{eq:lambda}.

The inverse of $J_{X,Y}$ is $J_{X,Y}^{-1}=[j'_{X,Y}]$, where $j'_{X,Y}$ is defined as follows:
\begin{equation}\label{eq:J'}
\begin{split}
j'_{X,Y}:(X\subo\otimes Y\subo\oplus X\subuno\otimes Y\subuno)&\oplus
\bigl((X\subo\otimes Y\subuno\oplus X\subuno\otimes Y\subo)\otimes V_1\bigr)\\
&\longrightarrow
\Bigl(X\subo\oplus(X\subuno\otimes V_1)\Bigr)\otimes 
\Bigl(Y\subo\oplus(Y\subuno\otimes V_1)\Bigr)
\\
x\subo\otimes y\subo&\mapsto x\subo\otimes y\subo,\\
(x\subo\otimes y\subuno)\otimes v &\mapsto x\subo\otimes (y\subuno\otimes v),\\
(x\subuno\otimes y\subo)\otimes v&\mapsto (x\subuno\otimes v)\otimes y\subo,\\
x\subuno\otimes y\subuno&\mapsto 
\frac{1}{2}\Bigl((x\subuno\otimes v_0)\otimes(y\subuno\otimes v_1)-
(x\subuno\otimes v_1)\otimes(y\subuno\otimes v_0)\Bigr).
\end{split}
\end{equation}
Note that $F$ preserves the braiding too. In other words, the following diagram is commutative for all $X,Y$:
\[
\begin{tikzcd}
F(X)\otimes F(Y)\arrow[rr, "\hbox{$[\text{`swap'}]$}"]\arrow[d, "J_{X,Y}"']&&
F(Y)\otimes F(X)\arrow[d, "J_{Y,X}"]\\
F(X\otimes Y)\arrow[rr, "F(\text{`parity swap'})"]&&F(Y\otimes X)
\end{tikzcd}
\]
Therefore, the functor $F$ is a braided monoidal equivalence.

\bigskip

\subsection{Recipe to get superalgebras from algebras in 
\texorpdfstring{$\Repe$}{RepC3}}

Given a linear map $m:A\otimes B\rightarrow C$ in $\sVec$, the composition
\[
F(A)\otimes F(B)\xrightarrow{J_{A,B}}F(A\otimes B)\xrightarrow{F(m)} F(C)
\]
gives a homomorphism $F(A)\otimes F(B)\rightarrow F(C)$ in $\Ver$. In particular, with $A=B=C$, given an algebra $(A,m)$ in $\sVec$ (that is, a superalgebra $A$ with product a morphism
$m:A\otimes A\rightarrow A$, $x\otimes y\mapsto x\bullet y$, in $\sVec$), $F(A)$ is an algebra in $\Ver$ with multiplication given by the composition
\[
F(A)\otimes F(A)\xrightarrow{J_{A,A}}F(A\otimes A)\xrightarrow{F(m)} F(A).
\]

Now, given a homomorphism $\mu:\cA\otimes \cB\rightarrow \cC$ in $\Repe$, our goal
is to find explicitly objects $A,B,C$ in $\sVec$ and a homomorphism 
$m:A\otimes B\rightarrow C$ such that there are isomorphisms 
$[\iota_A]:F(A)\rightarrow \cA$, $[\iota_B]:F(B)\rightarrow \cB$, 
$[\iota_C]:F(C)\rightarrow \cC$ in $\Ver$ that make the following diagram commutative:
\begin{equation}\label{eq:FABCmuFm}
\begin{tikzcd}
F(A)\otimes F(B)\arrow[r, "J_{A,B}"]\arrow[d, "\hbox{$[\iota_A\otimes\iota_B]$}"']&
F(A\otimes B)\arrow[r, "F(m)"]&F(C)\arrow[d, "\hbox{$[\iota_C]$}"]\\
\cA\otimes\cB\arrow[rr, "\hbox{$[\mu]$}"]&&\cC
\end{tikzcd}
\end{equation}
In particular, given  an algebra $\cA$ in $\Repe$, with multiplication $\mu(x\otimes y)=xy$, our goal is to find explicitly the superalgebra $(A,m)$, unique up to isomorphism, such that the algebras 
$\bigl(F(A),F(m)\circ J_{A,A}\bigr)$ and $(\cA,[\mu])$ are isomorphic algebras in 
$\Ver$. This is achieved in  Corollary \ref{co:main}.

To begin with, note that the objects $\cA,\cB,\cC$ in $\Repe$ decompose as in \eqref{eq:A0A1A2}: $\cA=\cA_0\oplus\cA_1\oplus \cA_2$, 
$\cB=\cB_0\oplus\cB_1\oplus \cB_2$, and $\cC=\cC_0\oplus\cC_1\oplus \cC_2$, 
where $\cA_i$, $\cB_i$ and $\cC_i$ are direct sums of copies of $V_i$, $i=0,1,2$.
Write $\cA'=\cA_0\oplus \cA_1$, $\cB'=\cB_0\oplus\cB_1$, and 
$\cC'=\cC_0\oplus\cC_1$. Then Properties \ref{properties} immediately imply the following result:

\begin{lemma}\label{le:mu'}
Given objects $\cA,\cB,\cC$ in $\Repe$ with the above decompositions, let $\mu:\cA\otimes\cB\rightarrow \cC$ be a homomorphism in $\Repe$. Then the inclusion maps
$\cA'\hookrightarrow\cA$, $\cB'\hookrightarrow\cB$, and $\cC'\hookrightarrow\cC$, induce isomorphisms in $\Ver$, and the diagram
\[
\begin{tikzcd}
\cA'\otimes\cB'\arrow[rr, "\hbox{$[\mu']$}"]
\arrow[d, hook]&& \cC'\arrow[d, hook]\\
\cA\otimes\cB\arrow[rr, "\hbox{$[\mu]$}"]&&\cC
\end{tikzcd}
\]
is commutative, where $\mu'\in\Hom_{\Repe}(\cA'\otimes\cB',\cC')$ is given by the formula
\[
\mu'(x\otimes y)\bydef\begin{cases}
\mathrm{proj}_{\cC_0} \mu(x\otimes y)&\text{for $x\in\cA_0, y\in\cB_0$ or $x\in\cA_1, y\in\cB_1$,}\\
\mathrm{proj}_{\cC_1} \mu(x\otimes y)&\text{for $x\in\cA_0,y\in\cB_1$, or $x\in\cA_1,y\in\cB_0$.}
\end{cases}
\]
(The projections are relative to the splitting $\cC=\cC_0\oplus\cC_1\oplus\cC_2$.)
\end{lemma}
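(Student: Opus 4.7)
The plan is to split the lemma into two claims and treat each in turn: (i) that each inclusion $\iota_\cA:\cA'\hookrightarrow\cA$ becomes invertible in $\Ver$, and (ii) that the square commutes after semisimplification.

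For (i), I would take $\pi_\cA:\cA\to\cA'$ to be the projection along $\cA_2$. Then $\pi_\cA\iota_\cA=\id_{\cA'}$ in $\Repe$, while $\iota_\cA\pi_\cA-\id_\cA$ is, up to sign, the projection of $\cA$ onto $\cA_2$ followed by its inclusion, and hence factors through a direct sum of copies of $V_2$. By Properties~\ref{properties}, $\End_{\Repe}(V_2)$ consists entirely of negligible endomorphisms (in characteristic $3$, $\trace(\id_{V_2})=0$ and every other endomorphism of $V_2$ is a scalar plus a nilpotent); by cyclicity of the trace, every morphism in $\Repe$ factoring through a direct sum of copies of $V_2$ is then negligible. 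Hence $[\iota_\cA]\circ[\pi_\cA]=[\id_\cA]$ in $\Ver$, giving the isomorphism; the same argument works for $\cB$ and $\cC$.

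For (ii), I would consider the difference
\[
\Delta\bydef\mu\circ(\iota_\cA\otimes\iota_\cB)-\iota_\cC\circ\mu'\,:\,\cA'\otimes\cB'\longrightarrow\cC,
\]
and decompose its source as $\bigoplus_{i,j\in\{0,1\}}\cA_i\otimes\cB_j$. The definition of $\mu'$ is arranged so that, on the summand $\cA_i\otimes\cB_j$, $\Delta$ has image in $\cC_k\oplus\cC_2$ where $k$ is the ``wrong'' index ($k=1$ when $(i,j)\in\{(0,0),(1,1)\}$, and $k=0$ otherwise). The aim is to show each of these four restrictions is negligible. The $\cC_2$-components are negligible already by step~(i). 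For the remaining pieces I would invoke the $\Repe$-decompositions $V_0\otimes V_0\cong V_0$, $V_0\otimes V_1\cong V_1\cong V_1\otimes V_0$, and $V_1\otimes V_1\cong V_0\oplus V_2$ to reduce to the claim that $\Hom_{\Repe}(V_i,V_j)$ with $i\neq j$ in $\{0,1\}$, and any Hom into or out of $V_2$, consist entirely of negligible morphisms; each is a short explicit trace computation in the bases of the $V_i$ fixed at the start of Section~\ref{ss:semiRep}.

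I do not expect any genuine obstacle. At bottom, the lemma is a combinatorial reflection in $\Ver$ of the facts $V_2\cong 0$ and $V_1\otimes V_1\cong V_0$ (rather than $V_1$) recorded in Properties~\ref{properties}, together with the observation that the definition of $\mu'$ was engineered precisely so that the components of $\mu$ it discards are the negligible ones.
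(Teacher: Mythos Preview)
Your proposal is correct and is essentially a detailed unpacking of the paper's own argument, which consists of the single sentence ``Properties~\ref{properties} immediately imply the following result.'' Your splitting into (i) and (ii), the use of the projection $\pi_\cA$ as inverse-up-to-negligibles, and the case analysis on $\cA_i\otimes\cB_j$ via the decompositions $V_0\otimes V_0\cong V_0$, $V_0\otimes V_1\cong V_1$, $V_1\otimes V_1\cong V_0\oplus V_2$ are exactly the content hidden behind that sentence, so there is no genuine difference in approach.
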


In particular,  if $(\cA,\mu)$ is an algebra in $\Repe$ 
(this means that $\sigma$ acts as an algebra automorphism), then the previous 
 lemma restricts as follows:

\begin{corollary}\label{co:mu'}
Let $(\cA,\mu)$ be an algebra in $\Repe$, with $\mu(x\otimes y)=xy$ for all 
$x,y\in\cA$. Pick a splitting as in \eqref{eq:A0A1A2}.
Then the algebra $(\cA,[\mu])$ in $\Ver$ is isomorphic to the algebra 
$(\cA',[\mu'])$, where $\cA'=\cA_0\oplus \cA_1$ and 
$\mu'\in\Hom_{\Repe}(\cA'\otimes\cA',\cA')$ is given by the formula
\[
\mu'(x\otimes y)=x\cdot y\bydef\begin{cases}
\mathrm{proj}_{\cA_0} xy&\text{for $x,y\in\cA_0$ or $x,y\in\cA_1$,}\\
\mathrm{proj}_{\cA_1} xy&\text{for $x\in\cA_0,y\in\cA_1$, or $x\in\cA_1,y\in\cA_0$.}
\end{cases}
\]
(The projections are relative to the splitting $\cA=\cA_0\oplus\cA_1\oplus\cA_2$.)
\end{corollary}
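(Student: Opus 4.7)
The corollary is a direct specialization of Lemma~\ref{le:mu'} to the case $\cA=\cB=\cC$, $\mu$ the multiplication. The substantive content of the corollary is already contained in the lemma; what remains is to repackage its conclusion as an \emph{algebra} isomorphism in $\Ver$ rather than a diagram of plain morphisms.

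First I would fix a splitting as in \eqref{eq:A0A1A2} and apply Lemma~\ref{le:mu'} verbatim, obtaining a morphism $\mu'\in\Hom_{\Repe}(\cA'\otimes\cA',\cA')$ defined, on each of the four summands of $\cA'\otimes\cA'=\bigoplus_{i,j\in\{0,1\}}\cA_i\otimes\cA_j$, by composing $\mu$ with the appropriate projection onto $\cA_0$ or $\cA_1$. The lemma also delivers the commutative square
\[
\begin{tikzcd}
\cA'\otimes\cA'\arrow[rr, "\hbox{$[\mu']$}"]
\arrow[d, hook]&& \cA'\arrow[d, hook]\\
\cA\otimes\cA\arrow[rr, "\hbox{$[\mu]$}"]&&\cA
\end{tikzcd}
\]
in $\Ver$, whose vertical arrows are isomorphisms because $V_2\simeq 0$ in $\Ver$ (so that $\cA_2$ contributes nothing, and $\cA\simeq\cA'$ in $\Ver$ via the inclusion).

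Next I would check that $\mu'$ is genuinely a morphism in $\Repe$: since the splitting $\cA=\cA_0\oplus\cA_1\oplus\cA_2$ is a decomposition of $\mathsf{C}_3$-modules, each projection $\mathrm{proj}_{\cA_i}$ is $\sigma$-equivariant, and $\mu$ is $\sigma$-equivariant by hypothesis, so their composite $\mu'$ is too. Hence $(\cA',\mu')$ is an algebra in $\Repe$, and $(\cA',[\mu'])$ an algebra in $\Ver$. The commuting square above now reads precisely as the statement that the inclusion $[\iota]\colon(\cA',[\mu'])\to(\cA,[\mu])$ is a morphism of algebras in $\Ver$; and being already an isomorphism of underlying objects, it is an algebra isomorphism, as required.

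\textbf{Expected difficulty.} I do not foresee any real obstacle: everything reduces to Lemma~\ref{le:mu'}. The only mildly delicate point is that the splitting \eqref{eq:A0A1A2} is non-unique, but the construction of $(\cA',\mu')$ depends on the choice only up to isomorphism, since all resulting algebras in $\Ver$ are canonically isomorphic to the fixed object $(\cA,[\mu])$ via the appropriate inclusion.
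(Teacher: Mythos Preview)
Your proposal is correct and matches the paper's approach: the corollary is stated as an immediate restriction of Lemma~\ref{le:mu'} to the case $\cA=\cB=\cC$, with no separate proof given. Your added remarks (that $\mu'$ is $\sigma$-equivariant because the projections are, and that the inclusion becomes an algebra isomorphism in $\Ver$) are the natural details one would supply, and they are sound.
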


\begin{remark}\label{re:delta}
Let $\mu:\cA\otimes\cB\rightarrow\cC$ be a homomorphism in $\Repe$ as before. On each of $\cA,\cB,\cC$, let the endomorphism $\delta$ in $\Repe$  be defined by 
$\delta(x)=\sigma(x)-x$. Let $\mu':\cA'\otimes\cB'\rightarrow\cC'$ be defined as in Lemma \ref{le:mu'}. Then, for any $x\in\cA_1$ and $y\in\cB_1$, the following 
equation holds:
\begin{equation}\label{eq:mu'xdeltay}
\mu'\bigl(x\otimes\delta(y)\bigr)=-\mu'\bigl(\delta(x)\otimes y\bigr)=
\frac{1}{2}\Bigl(\mu'\bigl(x\otimes\delta(y)\bigr)
  -\mu'\bigl(\delta(x)\otimes y\bigr)\Bigr).
\end{equation}
Indeed, write $\mu'(x\otimes y)=x\cdot y$. Let $c=x\cdot\delta(y)$, which belongs to 
$\cC_0$. Then we get
\[
c=\sigma(c)=\sigma(x)\cdot\sigma(\delta(y))=(x+\delta(x))\cdot \delta(y)=
c+\delta(x)\cdot\delta(y),
\]
so that $\delta(x)\cdot\delta(y)=0$ holds. Now write $d=x\cdot y\in \cA_0$. We get
\begin{multline*}
d=\sigma(d)=\sigma(x)\cdot\sigma(y)=(x+\delta(x))\cdot(y+\delta(y))\\=
d+x\cdot\delta(y)+\delta(x)\cdot y+\delta(x)\cdot\delta(y)=
d+x\cdot\delta(y)+\delta(x)\cdot y,
\end{multline*}
and \eqref{eq:mu'xdeltay} follows.
\end{remark}

Let $\mu:\cA\otimes\cB\rightarrow\cC$ be a homomorphism in $\Repe$ as in Remark \ref{re:delta}. Fix splittings of $\cA,\cB,\cC$ as in \eqref{eq:A0A1A2}, and pick subspaces $A\subuno$ of $\cA_1$ (resp., $B\subuno$ of $\cB_1$, $C\subuno$ of $\cC_1$) such that $\cA_1=A\subuno\oplus\delta(A\subuno)$ (resp., 
$\cB_1=B\subuno\oplus\delta(B\subuno)$, $\cC_1=C\subuno\oplus\delta(C\subuno)$), where, as before, $\delta=\sigma-\id$. Write $A\subo=\cA_0$ (resp., $B\subo=\cB_0$,
$C\subo=\cC_0$). Then $\cC$ decomposes as
\begin{equation}\label{eq:C0C1C1C2}
\cC=C\subo\oplus C\subuno\oplus\delta(C\subuno)\oplus\cC_2,
\end{equation}
and similarly for $\cA$ and $\cB$. Consider the objects $A=A\subo\oplus A\subuno$,
$B=B\subo\oplus B\subuno$, and $C=C\subo\oplus C\subuno$ in $\sVec$.

\begin{recipe}\label{recipe1} Take projections relative to the splitting
\eqref{eq:C0C1C1C2}, and define the homomorphism $m:A\otimes B\rightarrow C$ in
$\sVec$ as follows:
\[
\begin{split}
m(x\subo\otimes y\subo) &=\mathrm{proj}_{C\subo} \mu(x\subo\otimes y\subo)\\
m(x\subo\otimes y\subuno)&=\mathrm{proj}_{C\subuno} \mu(x\subo\otimes y\subuno)\\
m(x\subuno\otimes y\subo)&=\mathrm{proj}_{C\subuno} \mu(x\subuno\otimes y\subo)\\
m(x\subuno\otimes y\subuno)&=\mathrm{proj}_{C\subo} 
                    \mu\bigl(x\subuno\otimes\delta(y\subuno)\bigr)
\end{split}
\]
for all $x\subo\in A\subo$, $y\subo\in B\subo$ and $x\subuno\in A\subuno$,
$y\subuno\in B\subuno$. 

The homomorphism $m$ is a morphism in the category $\sVec$.
\end{recipe}

Given any object $\cA$ in $\Repe$, take a splitting $\cA=\cA_0\oplus\cA_1\oplus\cA_2$ as in \eqref{eq:A0A1A2}, and a refinement 
$\cA=A\subo\oplus A\subuno\oplus\delta(A\subuno)\oplus\cA_2$ as in \eqref{eq:C0C1C1C2}. Consider the object $A=A\subo\oplus A\subuno$ in $\sVec$, and the linear map $\iota_A:F(A)\rightarrow \cA$ defined as follows:
\begin{equation}\label{eq:iotaA}
\iota_A(x\subo)=x\subo,\quad \iota_A(x\subuno\otimes v_0)=x\subuno,
\quad\iota_A(x\subuno\otimes v_1)=\delta(x\subuno).
\end{equation}
This is a homomorphism in $\Repe$, that takes $F(A)$ isomorphically to 
$\cA'=\cA_0\oplus\cA_1$ and, as $\cA_2$ is isomorphic to $0$ in $\Ver$, $[\iota_A]$ turns out to be an isomorphism in $\Ver$.

\begin{theorem}\label{th:main}
Let $\mu:\cA\otimes\cB\rightarrow\cC$ be a homomorphism in $\Repe$. Pick
splittings of $\cA$, $\cB$, and $\cC$ as in  \eqref{eq:A0A1A2}, and  refinements  as in \eqref{eq:C0C1C1C2}. Define a homomorphism $m:A\otimes B\rightarrow C$
in $\sVec$ by means of Recipe \ref{recipe1}. Then, with $\iota_A,\iota_B,\iota_C$ 
as in 
\eqref{eq:iotaA}, the diagram \eqref{eq:FABCmuFm} is commutative.
\end{theorem}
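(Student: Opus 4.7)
The plan is to verify commutativity of \eqref{eq:FABCmuFm} case by case on the four natural summands of $F(A)\otimes F(B)$, namely $A\subo\otimes B\subo$, $A\subo\otimes(B\subuno\otimes V_1)$, $(A\subuno\otimes V_1)\otimes B\subo$, and $(A\subuno\otimes V_1)\otimes(B\subuno\otimes V_1)$. In each summand I will unwind explicit representatives in $\Repe$ of both compositions and show their difference is negligible. A useful preliminary move is to invoke Lemma~\ref{le:mu'} and replace $\mu$ by $\mu'\in\Hom_{\Repe}(\cA'\otimes\cB',\cC')$: since $\iota_A$, $\iota_B$, $\iota_C$ land in $\cA'$, $\cB'$, $\cC'$ by construction, and $\cC'\hookrightarrow\cC$ becomes an isomorphism in $\Ver$, this strips out all the $\cC_2$-valued contributions from the outset.

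On the summand $A\subo\otimes B\subo$ both paths reduce immediately to $\mu'(x\subo\otimes y\subo)$, so agreement is exact. The two mixed summands are symmetric and treated identically. On $A\subo\otimes(B\subuno\otimes V_1)$ I would decompose $\mu'(x\subo\otimes y\subuno)=c'+c''$ with $c'\in C\subuno$ and $c''\in\delta(C\subuno)$, and use that $\mu'(x\subo\otimes\delta(y\subuno))=\delta(\mu'(x\subo\otimes y\subuno))$ (which holds because $x\subo$ is $\sigma$-fixed and $\mu$ is equivariant), together with $\delta(c'')=0$. A short expansion then shows that the two paths, evaluated on $x\subo\otimes(y\subuno\otimes v)$ with $v=\alpha v_0+\beta v_1$, differ by the single term $\alpha c''$. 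As a morphism of the $V_1$-factor into the copy of $V_1$ in $\cC_1$ containing $c''$, this reads $v_0\mapsto c''$ and $v_1\mapsto 0$: a nilpotent endomorphism of $V_1$, and hence negligible by the characterisation of $\mathrm{N}(V_1,V_1)$ recorded in Subsection~\ref{ss:semiRep}.

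The main obstacle is the odd-odd summand. Writing $u=\alpha_u v_0+\beta_u v_1$ and $v=\alpha_v v_0+\beta_v v_1$, so that $\lambda(u\otimes v)=\alpha_u\beta_v-\beta_u\alpha_v$, Recipe~\ref{recipe1} gives that the upper path produces $\lambda(u\otimes v)\,\mu'(x\subuno\otimes\delta(y\subuno))$. The lower path expands $\mu'\bigl((\alpha_u x\subuno+\beta_u\delta(x\subuno))\otimes(\alpha_v y\subuno+\beta_v\delta(y\subuno))\bigr)$ bilinearly into four summands; applying Remark~\ref{re:delta} then kills the $\beta_u\beta_v$-summand via $\mu'(\delta(x\subuno)\otimes\delta(y\subuno))=0$ and converts $\mu'(\delta(x\subuno)\otimes y\subuno)$ to $-\mu'(x\subuno\otimes\delta(y\subuno))$, so that three of the four summands collapse to exactly the upper-path expression. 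The one residual term, $\alpha_u\alpha_v\,\mu'(x\subuno\otimes y\subuno)$, has $u\otimes v$-dependence given by the bilinear form sending $v_0\otimes v_0\mapsto 1$ and the other three basis tensors to $0$; this morphism $V_1\otimes V_1\to V_0$ in $\Repe$ vanishes on $\Lambda^2(V_1)$, hence factors through $S^2(V_1)\cong V_2$, and since $V_2\simeq 0$ in $\Ver$ the residue is negligible. Combining the four cases yields the commutativity of \eqref{eq:FABCmuFm} in $\Ver$.
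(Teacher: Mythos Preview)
Your proof is correct and follows essentially the same route as the paper: reduce to $\mu'$ via Lemma~\ref{le:mu'}, split into the four summands, invoke Remark~\ref{re:delta} for the odd--odd relations, and verify that the residual discrepancies are negligible. The only cosmetic difference is that the paper first composes with $j'_{A,B}$ (the inverse of $J_{A,B}$) and works on $F(A\otimes B)$, which makes the odd--odd case agree on the nose and leaves a negligible residue only in the mixed cases; you instead work directly on $F(A)\otimes F(B)$ and therefore pick up an extra negligible term (factoring through $S^2V_1\cong V_2$) in the odd--odd case as well.
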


\begin{proof}
Because of Lemma \ref{le:mu'}, it is enough to prove that the diagram (in $\Ver$)
\begin{equation}\label{eq:FABCmuFm'}
\begin{tikzcd}
F(A)\otimes F(B)\arrow[r, "J_{A,B}"]\arrow[d, "\hbox{$[\iota_A\otimes\iota_B]$}"']&
F(A\otimes B)\arrow[r, "F(m)"]&F(C)\arrow[d, "\hbox{$[\iota_C]$}"]\\
\cA'\otimes\cB'\arrow[rr, "\hbox{$[\mu']$}"]&&\cC'
\end{tikzcd}
\end{equation}
is commutative. (Here we use the same notation $\iota_A$ to denote the isomorphism
$F(A)\simeq \cA'$ in $\Repe$ induced by the original $\iota_A:F(A)\rightarrow\cA$ in
\eqref{eq:iotaA}.)

Using the inverse of $J_{A,B}$ (see \eqref{eq:J'}), this is equivalent to checking that in the next diagram in $\Repe$:
\[
\begin{tikzcd}
F(A)\otimes F(B)\arrow[d, "\iota_A\otimes\iota_B"']&
F(A\otimes B)\arrow[l, "j'_{A,B}"']\arrow[rr, 
"m\subo\oplus(m\subuno\otimes\id_{V_1}\hspace*{-2pt})"]&&F(C)\arrow[d, "\iota_C"]\\
\cA'\otimes\cB'\arrow[rrr, "\mu'"]&&&\cC'
\end{tikzcd}
\]
the difference $\Phi\bydef 
\iota_C\circ\bigl(m\subo\oplus(m\subuno\otimes\id_{V_1})\bigr)
- \mu'\circ(\iota_A\otimes\iota_B)\circ j'_{A,B}$ is negligible.

For $x\subo\in A\subo,y\subo\in B\subo$ we get
\[
\begin{split}
&x\subo\otimes y\subo\xrightarrow{j'_{A,B}}x\subo\otimes y\subo
\xrightarrow{\iota_A\otimes\iota_B} x\subo\otimes y\subo
\xrightarrow{\mu'}\mu'(x\subo\otimes y\subo),\\
&x\subo\otimes y\subo\xrightarrow{m\subo}m(x\subo\otimes y\subo)
 =\mu'(x\subo\otimes y\subo)
\xrightarrow{\iota_C}\mu'(x\subo\otimes y\subo),
\end{split}
\]
so that $\Phi$ is trivial on $A\subo\otimes B\subo$. In the same vein, for 
$x\subuno\in A\subuno,y\subuno\in B\subuno$ we get
\[
\begin{split}
&x\subuno\otimes y\subuno\xrightarrow{j'_{A,B}}
\frac{1}{2}\bigl((x\subuno\otimes v_0)\otimes (y\subuno\otimes v_1)-
       (x\subuno\otimes v_1)\otimes (y\subuno\otimes v_0)\bigr)\\
&\qquad\qquad \xrightarrow{\iota_A\otimes \iota_B}
  \frac{1}{2}\bigl(x\subuno\otimes\delta(y\subuno)
            -\delta(x\subuno)\otimes y\subuno\bigr)\\
&\qquad\qquad\qquad \xrightarrow{\mu'} 
      \frac{1}{2}\Bigl(\mu'\bigl(x\subuno\otimes\delta(y\subuno)\bigr)
            -\mu'\bigl(\delta(x\subuno)\otimes y\subuno\bigr)\Bigr)=
            \mu'\bigl(x\subuno\otimes\delta(y\subuno)\bigr)
            \quad \text{by \eqref{eq:mu'xdeltay}}\\[8pt]
&x\subuno\otimes y\subuno\xrightarrow{m\subo}m(x\subuno\otimes y\subuno)
   =\mu'\bigl(x\subuno\otimes \delta(y\subuno)\bigr)
\xrightarrow{\iota_C}\mu'\bigl(x\subuno\otimes \delta(y\subuno)\bigr),
\end{split}
\]
and $\Phi$ is trivial too on $A\subuno\otimes B\subuno$. Now, for $x\subo\in A\subo$
and $y\subuno\in B\subuno$, and for $\alpha,\beta\in \FF$, we get
\[
\begin{split}
&(x\subo\otimes y\subuno)\otimes(\alpha v_0+\beta v_1)
 \xrightarrow{j'_{A,B}} x\subo\otimes 
              \bigl(y\subuno\otimes (\alpha v_0+\beta v_1)\bigr)\\
&\qquad\qquad\qquad\qquad \xrightarrow{\iota_A\otimes\iota_B} x\subo\otimes
                      \bigl(\alpha y\subuno+\beta \delta(y\subuno)\bigr)
 \xrightarrow{\mu'} \mu'\bigl(\alpha x\subo\otimes y\subuno 
      +\beta x\subo\otimes\delta(y\subuno)\bigr),
 \\[10pt]
&(x\subo\otimes y\subuno)\otimes(\alpha v_0+\beta v_1)
 \xrightarrow{m\subuno\otimes\id_{V_1}} m(x\subo\otimes y\subuno)\otimes 
          (\alpha v_0+\beta v_1)\\
&\qquad\qquad\qquad\qquad\qquad\qquad\qquad 
   \xrightarrow{\ \iota_C\ } \alpha m(x\subo\otimes y\subuno) 
                             +\beta\delta\bigl(m(x\subo\otimes y\subuno)\bigr).
\end{split}
\]
But $\mu'(x\subo\otimes y\subuno)=a\subuno+\delta(b\subuno)$ for some 
$a\subuno,b\subuno\in C\subuno$. This gives $m(x\subo\otimes y\subuno)=a\subuno$. Also, as $x\subo$ is fixed by $\sigma$,
$\delta(a\subuno)=\delta\bigl(\mu'(x\subo\otimes y\subuno)\bigr)
  =\mu'\bigl(x\subo\otimes \delta(y\subuno)\bigr)$. As a consequence, we get 
\begin{multline*}
\Phi\bigl((x\subo\otimes y\subuno)\otimes (\alpha v_0+\beta v_1)\bigr)\\
=\alpha \bigl(\mu'(x\subo\otimes y\subuno) -m(x\subo\otimes y\subuno)\bigr)
+\beta \Bigl(\mu'\bigl(x\subo\otimes\delta(y\subuno)\bigr)-
    \delta\bigl(m(x\subo\otimes y\subuno)\bigr)\Bigr)
=\alpha\delta(b\subuno)\in \delta(C\subuno).
\end{multline*}
It follows that the restriction $\Phi\vert_{(A\subo\otimes B\subuno)\otimes V_1}$,
takes $(A\subo\otimes B\subuno)\otimes V_1$, which is a direct sum of copies of
$V_1$, to $\delta(C\subuno)$, which is a direct sum of copies of $V_0$, and hence it is negligible. In the same vein, the restriction 
$\Phi\vert_{(A\subuno\otimes B\subo)\otimes V_1}$ is negligible.

We conclude that $\Phi$ is negligible, as required.
\end{proof}

In particular, if $(\cA,\mu)$ is an algebra in $\Repe$, with $\mu(x\otimes y)=xy$ for all $x,y$, and we fix a splitting of $\cA$ as in \eqref{eq:A0A1A2} and a refinement $\cA=A\subo\oplus A\subuno\oplus \delta(A\subuno)\oplus\cA_2$ as in \eqref{eq:C0C1C1C2}, Recipe \ref{recipe1} becomes the following one.
(The reader should compare with \cite[Proposition 3.4]{Kannan}.)

\begin{recipe}\label{recipe} Take projections relative to this splitting, and define a multiplication $m$ 
($m(x\otimes y)\bydef x\bullet y$) on $A\bydef A\subo\oplus A\subuno$ as follows:
\[
\begin{split}
x\subo\bullet y\subo &=\mathrm{proj}_{A\subo} (x\subo y\subo)\\
x\subo\bullet y\subuno&=\mathrm{proj}_{A\subuno} (x\subo y\subuno)\\
x\subuno\bullet y\subo&=\mathrm{proj}_{A\subuno} (x\subuno y\subo)\\
x\subuno\bullet y\subuno&=\mathrm{proj}_{A\subo} 
                    \bigl(x\subuno\delta(y\subuno)\bigr)
\end{split}
\]
for all $x\subo,y\subo\in A\subo$ and $x\subuno,y\subuno\in A\subuno$. 

The algebra $(A,m)$ is an algebra in $\sVec$ (a superalgebra).
\end{recipe}

In this case, Theorem \ref{th:main} restricts to the following Corollary:

\begin{corollary}\label{co:main}
Let $(\cA,\mu)$ be an algebra in $\Repe$, with $\mu(x\otimes y)=xy$ for all $x,y$.
Pick a splitting $\cA=\cA_0\oplus\cA_1\oplus\cA_2$ as in \eqref{eq:A0A1A2}, and a refinement $\cA=A\subo\oplus A\subuno\oplus\delta(A\subuno)\oplus\cA_2$ as in \eqref{eq:C0C1C1C2}. Define a multiplication in $A=A\subo\oplus A\subuno$ by means of Recipe \ref{recipe}.

Then the algebras $(\cA,[\mu])$ and $\bigl(F(A),F(m)\circ J_{A,A}\bigr)$ in $\Ver$ are isomorphic.

In other words, $(A,m)$ is the superalgebra that corresponds to the `semisimplification' of $(\cA,\mu)$.
\end{corollary}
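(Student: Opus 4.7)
The plan is to deduce Corollary~\ref{co:main} as a direct specialization of Theorem~\ref{th:main} to the case $\cA=\cB=\cC$ and $\mu$ equal to the algebra multiplication. With these choices, the splittings and refinements coincide on all three copies, so we may take $A=B=C$, the morphism $m:A\otimes B\to C$ of Recipe~\ref{recipe1} becomes exactly the superalgebra multiplication produced by Recipe~\ref{recipe}, and the three isomorphisms $[\iota_A],[\iota_B],[\iota_C]$ from \eqref{eq:iotaA} all agree with a single isomorphism $[\iota_A]:F(A)\to\cA$ in $\Ver$.

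First I would invoke Theorem~\ref{th:main} to conclude that the diagram \eqref{eq:FABCmuFm} commutes in $\Ver$, that is,
\[
[\iota_A]\circ F(m)\circ J_{A,A}=[\mu]\circ[\iota_A\otimes\iota_A].
\]
This equation is precisely the statement that $[\iota_A]$ intertwines the two multiplications: the one on $F(A)$ given by $F(m)\circ J_{A,A}$ and the one on $\cA$ given by $[\mu]$. Hence $[\iota_A]$ is a morphism of algebras in $\Ver$ from $\bigl(F(A),F(m)\circ J_{A,A}\bigr)$ to $(\cA,[\mu])$.

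Next I would recall from the discussion immediately preceding Theorem~\ref{th:main} that $[\iota_A]$ is already known to be an isomorphism in $\Ver$: the map $\iota_A$ embeds $F(A)$ onto $\cA'=\cA_0\oplus\cA_1$ in $\Repe$, and the complement $\cA_2$ is isomorphic to $0$ in the semisimplification. Combining this with the fact that $[\iota_A]$ is multiplicative in $\Ver$ yields the desired algebra isomorphism, which is exactly the content of Corollary~\ref{co:main}.

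There is essentially no new obstacle here, since all the real work has already been carried out in Theorem~\ref{th:main}; the only point deserving a brief verification is that the morphism $m$ produced by Recipe~\ref{recipe} is indeed a morphism in $\sVec$ (that is, respects the $\ZZ/2$-grading). But this is built into the recipe: the products $x\subo\bullet y\subo$ and $x\subuno\bullet y\subuno$ land in $A\subo$ by construction, while the mixed products land in $A\subuno$, so the grading is preserved by fiat, and no further calculation is needed beyond what was done for Theorem~\ref{th:main}.
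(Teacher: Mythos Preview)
Your proposal is correct and follows exactly the paper's approach: the paper does not give a separate proof of the corollary but simply introduces it with ``In this case, Theorem~\ref{th:main} restricts to the following Corollary,'' which is precisely the specialization $\cA=\cB=\cC$ you carry out. Your added remarks (that $[\iota_A]$ is an isomorphism in $\Ver$ and that $m$ respects the $\ZZ/2$-grading by construction) are accurate and make explicit what the paper leaves implicit.
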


\bigskip

\section{From octonions to composition superalgebras}\label{se:octonions}

The notion of composition algebra on a symmetric tensor category over a field of characteristic not $2$ will be considered here. The order $3$ automorphisms of the Cayley algebras, i.e., of the eight-dimensional unital composition algebras, were determined in \cite{Canadian}. In particular,  any such automorphism on a Cayley algebra over a field of characteristic $3$ allows us to view the Cayley algebra as an algebra in $\Repe$, and hence to obtain, through the semisimplification functor
in \eqref{eq:S}, an algebra in $\Ver$ and  thus, through the equivalence $F$ in \eqref{eq:F}, a composition superalgebra.

\subsection{Composition algebras in a symmetric tensor category}\label{ss:composym}

A \emph{composition algebra} over a field $\FF$ is a triple $(\cC,\mu,\boldnup)$, where
$\mu:\cC\otimes\cC\rightarrow \cC$, $\mu(x\otimes y)=xy$  is the multiplication of
$\cC$, and $\boldnup:\cC\rightarrow \FF$ is a nonsingular multiplicative quadratic form, called the \emph{norm}.
Here nonsingular means that either the polar form 
$\boldnup(x,y)\bydef \boldnup(x+y)-\boldnup(x)-\boldnup(y)$ is a nondegenerate bilinear form, or the
characteristic of $\FF$ is $2$ and there is no nonzero element such that 
$\boldnup(x,\cC)=0=\boldnup(x)$. Note that the same symbol is used to denote the norm and its polar form. Also, the polar form may be considered as a linear map
$\boldnup:\cC\otimes\cC\rightarrow\FF$.
The norm being multiplicative means that the equation $\boldnup(xy)=\boldnup(x)\boldnup(y)$ holds for all $x,y\in\cC$.

Unital composition algebras (also termed Hurwitz algebras) over a field are the 
analogues of the classical algebras or real and complex numbers, 
quaternions, and octonions. In particular their dimension is restricted to $1$, $2$, $4$ or $8$. The reader may consult \cite[Chapter 2]{ZSSS}, \cite[Chapter VIII]{KMRT}, or the survey paper \cite{Eld_comp}.

Assume in the rest of the section that the characteristic of the ground field $\FF$ is not $2$. 

Linearizing twice the multiplicative identity one gets
\begin{equation}\label{eq:multiplicative}
\boldnup(xy,zt)+\boldnup(zy,xt)=\boldnup(x,z)\boldnup(y,t)
\end{equation}
for all $x,y,z,t\in\cC$, and conversely, the characteristic being not $2$, \eqref{eq:multiplicative} gives, with $z=x$ and $t=y$, the multiplicative condition $\boldnup(xy)=\boldnup(x)\boldnup(y)$.

Now, we may define a \emph{composition algebra} in a symmetric tensor category 
$\mathfrak{C}$ as an object $\cA$ endowed with  morphisms $\mu:\cA\otimes\cA\rightarrow \cA$ and $\boldnup:\cA\otimes\cA\rightarrow \mathbf{1}$, such that the following conditions are satisfied:
\begin{description}
\item[Symmetry]
$\boldnup\circ c_{\cA,\cA}=\boldnup$, where $c_{\cA,\cA}\in \End_{\mathfrak{C}}(\cA\otimes\cA)$ is the symmetric braiding.

\item[Multiplicativity]
The following equality of morphisms $\cA^{\otimes 4}\rightarrow \mathbf{1}$, generalizing \eqref{eq:multiplicative}, holds:
\[
\boldnup\circ(\mu\otimes\mu)\circ(\id + c_{13})=(\boldnup\otimes\boldnup)\circ c_{23}
\]
where we omit the isomorphism $\mathbf{1}\otimes\mathbf{1}\simeq\mathbf{1}$, and where $c_{12}=c_{\cA,\cA}\otimes \id_\cA\otimes \id_\cA$, 
$c_{23}=\id_\cA\otimes c_{\cA,\cA}\otimes \id_\cA$, and $c_{13}=c_{23}\circ c_{12}\circ c_{23}$.

\item[Nondegeneracy]
The composition
\[
\cA\xrightarrow{\id_\cA\otimes\mathrm{coev}_\cA}\cA\otimes\cA\otimes\cA^*
\xrightarrow{\boldnup\otimes \id_{\cA^*}}\cA^*
\]
is an isomorphism.  (We omit the associative and unitor morphisms, and 
$\mathrm{coev}_\cA$ denotes the coevaluation morphism 
$\mathbf{1}\rightarrow \cA\otimes \cA^*$ in the symmetric tensor category $\mathfrak{C}$.)
\end{description}

\smallskip

Assume now that the characteristic of the ground field $\FF$ is $3$, and let 
$(\cA,\mu,\boldnup)$ be a composition algebra endowed with an automorphism $\sigma$ with 
$\sigma^3=\id$. (This means that $\sigma$ leaves invariant both $\mu$ and 
$\boldnup$.) Then, looking at the polar form as a linear map 
 $\boldnup:\cA\otimes\cA\rightarrow \FF$, the triple $(\cA,\mu,\boldnup)$ is a composition algebra in $\Repe$.

\begin{lemma}\label{le:norm}
Let $(\cA,\mu,\boldnup)$ be a composition algebra endowed with an automorphism $\sigma$ with $\sigma^3=\id$. Let $\cA=\cA_0\oplus\cA_1\oplus\cA_2$ be a splitting as in
\eqref{eq:A0A1A2}. Then, with $\delta=\sigma-\id$, the following conditions hold:
\begin{enumerate}
\item $\boldnup\bigl(\ker\delta,\delta(\cA)\bigr)=0$,
\item $\boldnup\bigl(\delta(\cA_1),\delta(\cA)\bigr)=0$,
\item $\boldnup(\delta(x),y)+\boldnup(x,\delta(y))=0$ for all $x\in\cA_1$ and $y\in \cA$.
\end{enumerate}
\end{lemma}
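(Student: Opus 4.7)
The whole lemma comes out of a single identity for $\delta$, so I would first isolate that identity and then read off the three items. Since $\sigma$ is an automorphism of the composition algebra, it preserves the norm and hence its polar form: $\boldnup(\sigma x,\sigma y)=\boldnup(x,y)$ for all $x,y\in\cA$. Writing $\sigma=\id+\delta$ and expanding gives the key relation
\begin{equation*}
\boldnup(\delta x,y)+\boldnup(x,\delta y)+\boldnup(\delta x,\delta y)=0 \qquad\text{for all } x,y\in\cA. \tag{$\ast$}
\end{equation*}
I would also record two structural facts about $\delta$ that follow from the description of the indecomposable summands $V_0,V_1,V_2$: first, $\delta$ vanishes on $\cA_0$ and satisfies $\delta^2=0$ on $\cA_1$ (hence $\delta(\cA_1)\subseteq\ker\delta$); second, since $\charac\FF=3$, $\delta^3=(\sigma-\id)^3=\sigma^3-\id=0$ on all of $\cA$.

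With ($\ast$) in hand, item (1) is immediate: for $x\in\ker\delta$, the first and third terms of ($\ast$) vanish, leaving $\boldnup(x,\delta y)=0$ for every $y\in\cA$. For item (2), take $x\in\cA_1$; then $\delta x\in\ker\delta$ by the observation above, and applying (1) to $\delta x$ in place of $x$ yields $\boldnup(\delta x,\delta y)=0$ for all $y\in\cA$, which is exactly (2). For item (3), I would feed (2) back into ($\ast$): when $x\in\cA_1$, the third term $\boldnup(\delta x,\delta y)$ vanishes, and what remains is $\boldnup(\delta x,y)+\boldnup(x,\delta y)=0$.

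No step looks genuinely hard; the only thing to get right is the bookkeeping of which pieces of the splitting $\cA=\cA_0\oplus\cA_1\oplus\cA_2$ force $\delta$ to behave like a square-zero map, so that the cubic term in ($\ast$) can actually be eliminated. The characteristic $3$ hypothesis is used only once, to ensure $\delta^3=0$ globally (although for this particular lemma only $\delta^2\vert_{\cA_1}=0$ is really needed). In particular, no use is made of the multiplicative identity \eqref{eq:multiplicative} beyond the fact that $\sigma$ preserves $\boldnup$; the lemma is really a statement about $\sigma$-invariant symmetric bilinear forms on $\mathsf{C}_3$-modules in characteristic $3$.
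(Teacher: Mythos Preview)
Your proof is correct and follows essentially the same route as the paper: both derive the key identity $\boldnup(\delta x,y)+\boldnup(x,\delta y)+\boldnup(\delta x,\delta y)=0$ from $\sigma$-invariance of $\boldnup$, then read off (1) by taking $x\in\ker\delta$, (2) from $\delta(\cA_1)\subseteq\ker\delta$, and (3) by feeding (2) back into the identity. Your additional remarks on $\delta^3=0$ and the irrelevance of the multiplicativity of $\boldnup$ are accurate but not needed for the argument.
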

\begin{proof}
For any $x,y\in \cA$, $\boldnup(x,y)=\boldnup\bigl(\sigma(x),\sigma(y)\bigr)=\boldnup\bigl(x+\delta(x),y+\delta(y)\bigr)$, and this gives 
\begin{equation}\label{eq:normdelta}
\boldnup\bigr(\delta(x),y\bigr)+\boldnup\bigl(x,\delta(y)\bigr)
+\boldnup\bigl(\delta(x),\delta(y)\bigr)=0.
\end{equation}
If $\delta(x)=0$, then $\boldnup\bigl(x,\delta(y)\bigr)=0$ for all $y$, proving the first assertion. The second part follows since $\delta(\cA_1)$ is contained in 
$\ker\delta$, and hence \eqref{eq:normdelta} gives the third assertion. 
\end{proof}

Apply the semisimplification functor $S$ in \eqref{eq:S} to get
a composition algebra $(\cA,[\mu],[\boldnup])$ in $\Ver$.

As $\boldnup:\cA\otimes \cA\rightarrow \FF$ is a morphism in $\Repe$ ($\FF$ being a 
trivial object in $\Repe$: $\FF=\FF_0$), Lemma \ref{le:mu'} becomes, in this case, the next result:

\begin{lemma}\label{mu'n'}
Let $(\cA,\mu,\boldnup)$ be a composition algebra in $\Repe$.
Then the\linebreak composition algebra $(\cA,[\mu],[\boldnup])$ in $\Ver$ is isomorphic to the composition algebra $(\cA',[\mu'],[\boldnup'])$, where $\cA'=\cA_0\oplus \cA_1$, $\mu'\in\Hom_{\Repe}(\cA'\otimes\cA',\cA')$ as in  Corollary \ref{co:mu'} and $\boldnup'$ is given by the formula
\[
\boldnup'(x\otimes y)\bydef\begin{cases}
\boldnup(x\otimes y)&\text{for $x,y\in\cA_0$ or $x,y\in\cA_1$,}\\
0&\text{for $x\in\cA_0,y\in\cA_1$, or $x\in\cA_1,y\in\cA_0$.}
\end{cases}
\]
\end{lemma}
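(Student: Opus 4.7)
The plan is to derive this lemma as a double application of Lemma \ref{le:mu'}: once to the multiplication morphism $\mu$ (which Corollary \ref{co:mu'} already packages for us) and once to the polar form $\boldnup:\cA\otimes\cA\rightarrow\FF$, viewed as a morphism in $\Repe$ where $\FF$ is equipped with the trivial $\mathsf{C}_3$-action. The key observation that glues these two applications together is that the same inclusion $\cA'\hookrightarrow\cA$ serves as the isomorphism in $\Ver$ for both the algebra structure and the norm, since Lemma \ref{le:mu'} produces the ``reduced'' morphism $\mu'$ with respect to an arbitrary target $\cC$ in $\Repe$.

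For the multiplication, Corollary \ref{co:mu'} directly yields the formula for $\mu'$ and the statement that the inclusion $\cA'\hookrightarrow\cA$ is an isomorphism in $\Ver$ intertwining $[\mu]$ and $[\mu']$. For the norm, the target object $\FF$, being a trivial $\mathsf{C}_3$-module, decomposes as in \eqref{eq:A0A1A2} with $\FF=\FF_0$ and $\FF_1=\FF_2=0$; hence $\FF'=\FF$ and the inclusion $\FF'\hookrightarrow\FF$ is the identity. Applying Lemma \ref{le:mu'} with $\cB=\cA$ and $\cC=\FF$, the prescribed formula for $\mu'$ specializes to
\[
\boldnup'(x\otimes y)=\begin{cases}\mathrm{proj}_{\FF}\boldnup(x,y)=\boldnup(x,y)&\text{for $x,y\in\cA_0$ or $x,y\in\cA_1$,}\\ \mathrm{proj}_{0}\boldnup(x,y)=0&\text{for the mixed cases,}\end{cases}
\]
which is exactly the formula in the statement. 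The commutativity of the diagram in Lemma \ref{le:mu'} then ensures that this inclusion intertwines $[\boldnup]$ and $[\boldnup']$ in $\Ver$.

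Combining both applications, the inclusion $\cA'\hookrightarrow\cA$ provides a morphism of triples $(\cA',\mu',\boldnup')\rightarrow(\cA,\mu,\boldnup)$ in $\Repe$ whose image under the semisimplification functor $S$ is an isomorphism in $\Ver$. Since $S$ is $\FF$-linear and braided monoidal, the three composition-algebra axioms recalled in Section \ref{ss:composym} (symmetry, multiplicativity, and nondegeneracy) are transported by $S$ from $(\cA,\mu,\boldnup)$ to $(\cA',\mu',\boldnup')$, so the latter is indeed a composition algebra in $\Ver$ and the isomorphism is one of composition algebras. There is no real technical obstacle in this argument: the whole content is the bookkeeping observation that $\FF$ is a trivial object of $\Repe$, which forces the mixed components of $\boldnup'$ to vanish (consistent with the fact that $\Hom_{\Ver}(V_1,V_0)=0$, making any such component automatically negligible regardless of the underlying scalars given by $\boldnup$).
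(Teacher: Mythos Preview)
Your proposal is correct and follows exactly the paper's approach: the paper does not give a separate proof of this lemma but simply introduces it by observing that $\boldnup:\cA\otimes\cA\rightarrow\FF$ is a morphism in $\Repe$ with $\FF=\FF_0$ trivial, so Lemma~\ref{le:mu'} specializes to the stated result. Your write-up merely unpacks this specialization in more detail (and adds the harmless extra remark about the composition-algebra axioms being transported by $S$), but the content is identical.
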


Recipe \ref{recipe1} with $\cA=\cB$ and $\cC=\FF$ gives the following: 

\begin{recipe}\label{recipe2}
Let $(\cA,\mu,\boldnup)$ be a composition algebra in $\Repe$. Take $A\subo=\cA_0$ and 
$A\subuno$ as in \eqref{eq:C0C1C1C2}, and  define a bilinear map 
$\nup$ 
on $A=A\subo\oplus A\subuno$ (or equivalently a linear map 
$A\otimes A\rightarrow \FF$) as follows:
\[
\begin{split}
\nup(x\subo,y\subo)&= \boldnup(x\subo,y\subo)\\
\nup(x\subo,y\subuno)&=0=\nup(y\subuno,x\subo)\\
\nup(x\subuno,y\subuno)&=\boldnup\bigl(x\subuno,\delta(y\subuno)\bigr)
\end{split}
\]
for all $x\subo,y\subo\in A\subo$ and $x\subuno,y\subuno\in A\subuno$.

Note that Lemma \ref{le:norm} gives 
$\nup(x\subuno,y\subuno)=-\nup(y\subuno,x\subuno)$, so $\nup$ is `supersymmetric'.
\end{recipe}

And finally, Theorem \ref{th:main} gives our next result:

\begin{theorem}\label{th:main2}
Let $(\cA,\mu,\boldnup)$ be a composition algebra in $\Repe$, with $\mu(x\otimes y)=xy$ for all $x,y$.
Pick a splitting $\cA=\cA_0\oplus\cA_1\oplus\cA_2$ as in \eqref{eq:A0A1A2}, and a refinement $\cA=A\subo\oplus A\subuno\oplus\delta(A\subuno)\oplus\cA_2$ as in \eqref{eq:C0C1C1C2}. Define a multiplication $m$ in $A=A\subo\oplus A\subuno$ by means of Recipe \ref{recipe}, and a norm $\nup$ as in Recipe \ref{recipe2}.

Then the composition algebras $(\cA,[\mu],[\boldnup])$ and 
$\bigl(F(A),J_{A,A}\circ F(m),J_{A,A}\circ F(\nup)\bigr)$ in $\Ver$ are isomorphic.

In other words,  $(A,m,\nup)$ is the composition superalgebra that corresponds to the `semisimplification' of $(\cA,\mu,\boldnup)$.
\end{theorem}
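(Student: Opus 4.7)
The plan is to reduce Theorem \ref{th:main2} to two independent applications of Theorem \ref{th:main}: one for the multiplication $\mu$ and one for the norm $\boldnup$. For the multiplication, Corollary \ref{co:main} already asserts that $(\cA,[\mu])$ and $\bigl(F(A),F(m)\circ J_{A,A}\bigr)$ are isomorphic algebras in $\Ver$, with the isomorphism implemented by $[\iota_A]$. So the only genuinely new content is to transport the norm through the same isomorphism and then argue that the composition-algebra axioms survive.

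For the norm, I would apply Theorem \ref{th:main} with $\cB=\cA$ and $\cC=\FF$, viewing $\boldnup$ as a morphism $\cA\otimes\cA\to\FF$ in $\Repe$ (the unit $\FF$ being trivial as a $\mathsf{C}_3$-module, i.e., a sum of copies of $V_0$). Here the decomposition \eqref{eq:A0A1A2} of the codomain degenerates to $\cC_0=\FF$, $\cC_1=0=\cC_2$, and the refinement \eqref{eq:C0C1C1C2} gives $C\subo=\FF$, $C\subuno=0$, with $\iota_\FF=\id_\FF$. A case-by-case comparison of the four lines of Recipe \ref{recipe1} with those of Recipe \ref{recipe2} shows that they coincide: $\mathrm{proj}_{C\subo}$ is the identity on scalars, $\mathrm{proj}_{C\subuno}=0$ accounts for the two mixed cases, and the fourth line reads $\boldnup\bigl(x\subuno,\delta(y\subuno)\bigr)$ in both recipes. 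Theorem \ref{th:main} then delivers the commutativity of the diagram \eqref{eq:FABCmuFm} for $\boldnup$, which amounts to the equality $[\boldnup]\circ\bigl([\iota_A]\otimes[\iota_A]\bigr)=F(\nup)\circ J_{A,A}$ in $\Ver$.

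Combining the two instances, the same isomorphism $[\iota_A]:F(A)\to\cA$ (which depends only on the chosen refinement, not on the morphism being transported) simultaneously intertwines $F(m)\circ J_{A,A}$ with $[\mu]$ and $F(\nup)\circ J_{A,A}$ with $[\boldnup]$. Hence $\bigl(F(A),F(m)\circ J_{A,A},F(\nup)\circ J_{A,A}\bigr)$ and $(\cA,[\mu],[\boldnup])$ are isomorphic as triples in $\Ver$. Since the semisimplification functor $S:\Repe\to\Ver$ is $\FF$-linear and braided monoidal, it sends the composition algebra $(\cA,\mu,\boldnup)$ in $\Repe$ to a composition algebra in $\Ver$; and since $F:\sVec\to\Ver$ is a braided monoidal equivalence, it reflects the composition-algebra axioms (symmetry, multiplicativity, and nondegeneracy, each phrased purely in terms of the symmetric monoidal and rigid structure). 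Therefore $(A,m,\nup)$ is a composition algebra in $\sVec$, i.e., a composition superalgebra.

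The only calculation of any substance is the verification in the second paragraph that Recipe \ref{recipe1} specialized to $\cC=\FF$ literally reproduces Recipe \ref{recipe2}; everything else is formal, being either a direct quotation of Theorem \ref{th:main} or an invocation of the braided monoidal nature of $S$ and $F$. I do not anticipate a hard step.
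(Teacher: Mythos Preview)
Your proposal is correct and matches the paper's approach: the paper presents Theorem \ref{th:main2} with the lead-in ``And finally, Theorem \ref{th:main} gives our next result,'' i.e., as an immediate specialization of Theorem \ref{th:main} with $\cB=\cA$ and $\cC=\FF$, exactly as you spell out. Your additional remark that the composition-algebra axioms transfer because $S$ and $F$ are braided monoidal is a useful complement not made explicit in the paper.
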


\bigskip

\subsection{Order \texorpdfstring{$3$}{3} automorphisms of Cayley algebras}\label{ss:order3}

A unital composition algebra (or Hurwitz algebra) of dimension $\geq 2$ is said to be \emph{split} if its norm is isotropic. For each dimension $2$, $4$ or $8$, there is a unique split Hurwitz algebra, up to isomorphism. The split Cayley algebra has a canonical basis with multiplication given in Table \ref{ta:good_basis}. The elements of the canonical basis are all isotropic and they form a hyperbolic basis:
\[
\boldnup(e_1,e_2)=1=\boldnup(u_i,v_i),\ i=1,2,3.
\]
All the other values of the polar form for basic elements are either $0$ or follow from the above by using that $\boldnup$ is symmetric. Note that the $u_i$'s generate the whole algebra.

\begin{table}[!h]
\[
\vcenter{\offinterlineskip
\halign{\hfil$#$\enspace\hfil&#\vreglon
 &\hfil\enspace$#$\enspace\hfil
 &\hfil\enspace$#$\enspace\hfil&#\vregleta
 &\hfil\enspace$#$\enspace\hfil
 &\hfil\enspace$#$\enspace\hfil
 &\hfil\enspace$#$\enspace\hfil&#\vregleta
 &\hfil\enspace$#$\enspace\hfil
 &\hfil\enspace$#$\enspace\hfil
 &\hfil\enspace$#$\enspace\hfil&#\vreglon\cr
 &\omit\hfil\vrule width 1pt depth 4pt height 10pt
   &e_1&e_2&\omit&u_1&u_2&u_3&\omit&v_1&v_2&v_3&\cr
 \noalign{\hreglon}
 e_1&& e_1&0&&u_1&u_2&u_3&&0&0&0&\cr
 e_2&&0&e_2&&0&0&0&&v_1&v_2&v_3&\cr
 &\multispan{12}{\hregletafill}\cr
 u_1&&0&u_1&&0&v_3&-v_2&&-e_1&0&0&\cr
 u_2&&0&u_2&&-v_3&0&v_1&&0&-e_1&0&\cr
 u_3&&0&u_3&&v_2&-v_1&0&&0&0&-e_1&\cr
 &\multispan{12}{\hregletafill}\cr
 v_1&&v_1&0&&-e_2&0&0&&0&u_3&-u_2&\cr
 v_2&&v_2&0&&0&-e_2&0&&-u_3&0&u_1&\cr
 v_3&&v_3&0&&0&0&-e_2&&u_2&-u_1&0&\cr
 \noalign{\hreglon}}}
\]
\caption{{\vrule width 0pt height 15pt}
Multiplication table of the split Cayley algebra.}
\label{ta:good_basis}
\end{table}

The subalgebra spanned by the orthogonal idempotents $e_1$ and $e_2$ is the split
Hurwitz algebra in dimension $2$, while the subalgebra spanned by $e_1,e_2,u_1,v_1$ is the split quaternion algebra.

Among Cayley algebras (i.e., eight-dimensional Hurwitz algebras) over a field 
$\FF$ of characteristic $3$, only the split one is endowed with order $3$ automorphisms. The order $3$ automorphisms are then classified, up to conjugacy,
in this  theorem.

\begin{theorem}[\hbox{\cite[Theorem 6.3]{Canadian}}]\label{th:order3}
Let $(\cC,\mu,\boldnup)$ be a Cayley algebra over a field $\FF$ of characteristic $3$, and let $\sigma$ be an order $3$ automorphism of $(\cC,\mu,\boldnup)$. Then 
$(\cC,\mu,\boldnup)$ is the split Cayley algebra and one of the following conditions holds:
\begin{enumerate}
\item $(\sigma-\id)^2=0$, and there exists a canonical basis of $\cC$ such that
\[
\sigma(u_i)=u_i,\ i=1,2,\quad \sigma(u_3)=u_3+u_2.
\]

\item $(\sigma-\id)^2\ne 0$ and there is a quadratic \'etale subalgebra $\cK$ of 
$\cC$ fixed elementwise by $\sigma$. 

If $\FF$ is algebraically closed, then there is a canonical basis of $\cC$ such that 
\[
\sigma(u_i)=u_{i+1}\quad \text{(indices modulo $3$).}
\]

\item There is a canonical basis such that
\[
\sigma(u_i)=u_i,\ i=1,2,\quad \sigma(u_3)=u_3+v_3-(e_1-e_2).
\]

\item There is a canonical basis such that
\[
\sigma(u_i)=u_i,\ i=1,2,\quad \sigma(u_3)=u_3+u_2+v_3-(e_1-e_2).
\]
\end{enumerate}
\end{theorem}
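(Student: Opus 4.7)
The strategy is to analyze the nilpotent endomorphism $\delta := \sigma - \id$. Since $\charac\FF = 3$, the binomial expansion yields $(\sigma - \id)^3 = \sigma^3 - \id = 0$, so $\delta$ is nilpotent of index at most three. As $\sigma \ne \id$, the proof naturally splits into the case of nilpotency index two (giving Case (1)) and index three (giving Cases (2)--(4)). Throughout, Lemma \ref{le:norm} is the key technical tool controlling the interaction between $\delta$ and the norm; in particular, its first assertion $\boldnup(\ker\delta, \delta(\cC)) = 0$ will repeatedly yield isotropic vectors.

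First, I would show that $\cC$ must be split. If $\delta^2 = 0$, then $\im\delta \subseteq \ker\delta$, so Lemma \ref{le:norm}(i) forces every nonzero element of $\im\delta$ to be isotropic. If $\delta^2 \ne 0$, pick $x$ with $\delta^2(x) \ne 0$; then $\delta^2(x) \in \ker\delta \cap \im\delta$, and the same lemma shows it is isotropic. In either case, $\cC$ has a nonzero isotropic vector, hence is split, and we can work with its canonical basis.

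Next, I construct the canonical basis in each case. For Case (1), rank-nullity plus $\im\delta \subseteq \ker\delta$ gives $\dim\ker\delta \geq 4$, and a careful analysis (using that $\ker\delta$ contains the identity and a nontrivial isotropic radical $\im\delta$) yields a Peirce decomposition with orthogonal idempotents $e_1, e_2$ fixed by $\sigma$, from which the generators $u_1, u_2, u_3, v_1, v_2, v_3$ are chosen so that $u_1, u_2 \in \ker\delta$ and $\delta(u_3) = u_2$. For Cases (2)--(4), where $\delta$ exhibits a Jordan block of length three, I would separate subcases by the isomorphism type and action of the fixed subalgebra $\cK := \ker\delta$: Case (2) corresponds to $\cK$ containing a two-dimensional étale subalgebra $\FF e_1 \oplus \FF e_2$ fixed pointwise and no additional isotropic fixed vectors relevant to the Jordan structure, which after scalar extension to $\Falg$ allows $\sigma$ to cyclically permute the $u_i$; Cases (3) and (4) arise when additional isotropic vectors lie in the Peirce components relative to $e_1, e_2$, and are distinguished by whether a specific correction term of the form $u_2$ appears in $\sigma(u_3)$.

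The main obstacle is the explicit construction of canonical bases in Cases (3) and (4) and their separation from Case (2). The key difficulty lies in choosing idempotents and isotropic generators $u_i, v_i$ that simultaneously align with the Peirce decomposition induced by $e_1, e_2$ and realize the prescribed $\sigma$-action verbatim. This requires exploiting the freedom within each Jordan block of $\delta$ on the complementary Peirce components, and in intermediate steps one often passes to $\Falg$ to extract the Jordan structure before descending back to $\FF$ using that the generators can be rechosen to have the required hyperbolic norm form.
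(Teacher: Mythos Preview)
The paper does not contain a proof of this theorem: it is quoted verbatim from \cite[Theorem~6.3]{Canadian} and used as input for the computations in \S\ref{ss:semiCayley}. There is therefore no ``paper's own proof'' to compare your proposal against.

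As a standalone sketch, your opening steps are sound. The argument that $\cC$ is split via Lemma~\ref{le:norm}(i) is correct: in either nilpotency case you exhibit a nonzero element of $\ker\delta\cap\im\delta$, and since the characteristic is not~$2$ the vanishing of the polar form on this element forces its norm to vanish. The rank--nullity bound $\dim\ker\delta\ge 4$ in Case~(1) is also correct.

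However, the heart of the classification---separating Cases (2), (3), (4) and building the explicit canonical bases---is not really addressed. Your proposal to distinguish them ``by the isomorphism type and action of the fixed subalgebra $\cK=\ker\delta$'' is the right instinct, but note that Cases~(3) and~(4) have \emph{identical} Jordan type (one block of length~3, two of length~2, one of length~1; cf.\ the Jordan chains displayed in \S\ref{ss:semiCayley}), and both have $\ker\delta$ of the same dimension with $1\in\ker\delta$. The actual invariant separating them in \cite{Canadian} is finer (it involves the structure of the fixed subalgebra and the relation of $\sigma$ to idempotents of the associated Okubo algebra), and your sketch does not identify it. Likewise, the assertion that in Case~(2) one can ``after scalar extension to $\Falg$\ldots\ descend back to $\FF$'' glosses over exactly the point where the theorem is weaker over non-closed fields: the canonical form in~(2) is only asserted when $\FF$ is algebraically closed. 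If you intend to supply a genuine proof rather than cite \cite{Canadian}, these are the places requiring substantive work.
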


It must be remarked that the automorphism in item (1) above corresponds to the 
so called \emph{quaternionic idempotents} of Okubo algebras, while the 
automorphism in item (4) corresponds to the \emph{singular idempotents} of 
Okubo algebras. These are specific  to characteristic $3$ and have no counterpart 
in other characteristics. For details, the reader may consult \cite{Canadian}. 

\bigskip

\subsection{`Semisimplification' of Cayley algebras}\label{ss:semiCayley}

Assume in this section that the characteristic of the ground field $\FF$ is $3$.

For each of the possibilities in Theorem \ref{th:order3}, the unital composition superalgebra that corresponds to the semisimplification of the Hurwitz algebra 
$(\cC,\mu,\boldnup)$ will be determined here. In order to do this, it is enough to apply Recipes \ref{recipe} and \ref{recipe2}.

\begin{enumerate}
\item 
$\sigma(u_i)=u_i,\ i=1,2,\quad \sigma(u_3)=u_3+u_2$. Then $e_1$, $e_2$, $v_1$, and $v_3$ are fixed by $\sigma$, while $\sigma(v_2)=v_2-v_3$. With $\delta=\sigma-\id$, we have $u_3\xrightarrow{\delta} u_2\xrightarrow{\delta}0$, $v_2\xrightarrow{\delta}-v_3\xrightarrow{\delta}0$, so we get a splitting
$\cC=C\subo\oplus C\subuno\oplus \delta(C\subuno)\oplus\cC_2$ as in \eqref{eq:C0C1C1C2} with $C\subo=\espan{e_1,e_2,u_1,v_1}$, $C\subuno=\espan{u_3,v_2}$ and $\cC_2=0$. The multiplication in 
$C=C\subo\oplus C\subuno$ is given by the table:
\begin{equation}\label{eq:B42}
\vcenter{\offinterlineskip
\halign{\hfil$#$\enspace\hfil&#\vreglon
 &\hfil\enspace$#$\enspace\hfil
 &\hfil\enspace$#$\enspace\hfil
 &\hfil\enspace$#$\enspace\hfil
 &\hfil\enspace$#$\enspace\hfil&#\vregleta
 &\hfil\enspace$#$\enspace\hfil
 &\hfil\enspace$#$\enspace\hfil&#\vreglon\cr
 &\omit\hfil\vrule width 1pt depth 4pt height 10pt
   &e_1&e_2&u_1&v_1&\omit&u_3&v_2&\cr
 \noalign{\hreglon}
 e_1&& e_1&0&u_1&0&&u_3&0&\cr
 e_2&& 0&e_2&0&v_1&&0&v_2&\cr
 u_1&&0&u_1&0&-e_1&&-v_2&0&\cr
 v_1&&v_1&0&-e_2&0&&0&u_3&\cr
 &\multispan{9}{\hregletafill}\cr
 u_3&&0&u_3&v_2&0&&-v_1&e_1&\cr
 v_2&&v_2&0&0&-u_3&&-e_2&-u_1&\cr
 \noalign{\hreglon}}}
\end{equation}
The norm $\nup$ restricts to $\boldnup$ on the even part $C\subo$, and satisfies 
$\nup(u_3,v_2)=\boldnup\bigl(u_3,\delta(v_2)\bigr)
=\boldnup(u_3,-v_3)=-1=-\nup(v_2,u_3)$. 

This composition superalgebra is the superalgebra $B(4,2)$ in \cite{Shestakov,EldOkubo}.

\smallskip

\item There is a quadratic \'etale subalgebra $\cK$ of 
$\cC$ fixed elementwise by $\sigma$, and the action of $\sigma$ on $\cK^\perp$ (orthogonal relative to $\boldnup$) is given by two cycles of length $3$. This gives the decomposition in \eqref{eq:A0A1A2}
with $\cC_0=\cK$, $\cC_1=0$ and $\cC_2=\cK^\perp$. Then the semisimplification simply gives the composition algebra $\cK$ with trivial odd component.

\smallskip

\item $u_1$ and $u_2$ are fixed by $\sigma$, while $\sigma(u_3)=u_3+v_3-(e_1-e_2)$.
We get the following chains (Jordan blocks) for the action of $\delta=\sigma-\id$:
\[
\begin{split}
&u_3\xrightarrow{\ \delta\ } v_3-(e_1-e_2)
             \xrightarrow{\ \delta\ }-v_3\xrightarrow{\delta}0,\\
&v_1\xrightarrow{\ \delta\ } u_2\xrightarrow{\ \delta\ } 0,\\
&v_2\xrightarrow{\ \delta\ } -u_1\xrightarrow{\ \delta\ } 0,\\
&1\xrightarrow{\ \delta\ }0,
\end{split}
\]
so we get a splitting
$\cC=C\subo\oplus C\subuno\oplus \delta(C\subuno)\oplus\cC_2$ as in \eqref{eq:C0C1C1C2} with $C\subo=\FF 1$, $C\subuno=\espan{v_1,v_2}$ and 
$\cC_2=\espan{u_3,v_3,e_1-e_2}$. The multiplication in 
$C=C\subo\oplus C\subuno$ is given by the table:
\begin{equation}\label{eq:B12}
\vcenter{\offinterlineskip
\halign{\hfil$#$\enspace\hfil&#\vreglon
 &\hfil\enspace$#$\enspace\hfil&#\vregleta
 &\hfil\enspace$#$\enspace\hfil
 &\hfil\enspace$#$\enspace\hfil&#\vreglon\cr
 &\omit\hfil\vrule width 1pt depth 4pt height 10pt
   &1&\omit&v_1&v_2&\cr
 \noalign{\hreglon}
 1&&1&& v_1&v_2&\cr
 &\multispan{6}{\hregletafill}\cr
 v_1&&v_1&&0&-1&\cr
 v_2&&v_2&&1&0&\cr
 \noalign{\hreglon}}}
\end{equation}
The norm satisfies $\nup(1,1)=2=-1$, and 
$\nup(v_1,v_2)=\boldnup\bigl(v_1,\delta(v_2)\bigr)=\boldnup(v_1,-u_1)=-1
=-\nup(v_2,v_1)$.

This composition superalgebra is the superalgebra $B(1,2)$ in \cite{Shestakov,EldOkubo}.

\item $u_1$ and $u_2$ are fixed by $\sigma$, while $\sigma(u_3)=u_3+u_2+v_3-(e_1-e_2)$.
We get the following chains for the action of $\delta=\sigma-\id$:
\[
\begin{split}
&u_3\xrightarrow{\ \delta\ } u_2+v_3-(e_1-e_2)
             \xrightarrow{\ \delta\ }-v_3\xrightarrow{\ \delta\ }0,\\
&v_1\xrightarrow{\ \delta\ } u_2\xrightarrow{\ \delta\ } 0,\\
&v_2\xrightarrow{\ \delta\ } -v_3-u_1\xrightarrow{\ \delta\ } 0,\\
&1\xrightarrow{\ \delta\ }0,
\end{split}
\]
so we get a splitting
$\cC=C\subo\oplus C\subuno\oplus \delta(C\subuno)\oplus\cC_2$ as in \eqref{eq:C0C1C1C2} with $C\subo=\FF 1$, $C\subuno=\espan{v_1,v_2}$ and 
$\cC_2=\espan{u_3,v_3,u_2-(e_1-e_2)}$. The multiplication table on $C=C\subo\oplus C\subuno$ and its norm coincide with those in the previous case.

\end{enumerate}

\bigskip

\section{Semisimplification: skew transformations, derivations}\label{se:ss}

This last section will show some features of the semisimplification process.
The Lie algebra of derivations of an algebra $(\cA,\mu)$ in $\Repe$ is also an algebra in $\Repe$ in a natural way, but its semisimplification may fail to be the Lie superalgebra of derivations of the semisimplification of $(\cA,\mu)$. However,
the semisimplification of the Lie algebra of the skew-symmetric transformations, relative to the norm, of a composition algebra in $\Repe$ is isomorphic to the 
orthosymplectic Lie superalgebra of skew-transformations (in the super setting) of the corresponding composition superalgebra.

Throughout the section, the characteristic of the ground field $\FF$ will be assumed to be $3$.

\subsection{Skew transformations}\label{ss:skew}

Given an object $\cV$ in $\Repe$, we will denote by $\cV^{ss}$ an object in $\sVec$,
 such that $F(\cV^{ss})$ and $\cV$  ($=S(\cV)$ for $S$ in \eqref{eq:S}) are isomorphic as objects in $\Ver$. The vector superspace
 $\cV^{ss}$ will be called a \emph{semisimplification} of $\cV$. In the same vein, given an algebra $(\cA,\mu)$ in $\Repe$, we will denote by $(\cA^{ss},\mu^{ss})$ a superalgebra (i.e., an algebra in $\sVec$) such that 
$\bigl(F(\cA^{ss}),F(\mu^{ss})\circ J_{\cA^{ss},\cA^{ss}}\bigr)$ is isomorphic to the algebra $(\cA,[\mu])$ in $\Ver$. The multiplication $\mu$ will be omitted if it is clear from the context,

The same applies to vector spaces endowed with a bilinear form: 
$(\cV^{ss},\boldbup^{ss})$; or to composition algebras 
$(\cC^{ss},\mu^{ss},\boldnup^{ss})$.

\begin{proposition}\label{pr:endos_skew}
Let $\cV$ be an object in $\Repe$. 
\begin{itemize}
\item 
The associative superalgebras  $\End_\FF(\cV^{ss})$ and 
$\bigl(\End_\FF(\cV)\bigr)^{ss}$ are isomorphic.
\item Let $\boldbup:\cV\otimes\cV\rightarrow \FF$ be a morphism in $\Repe$ such that the bilinear form (denoted by the same symbol) given by 
$\boldbup(x,y)\bydef \boldbup(x\otimes y)$ is  symmetric and  nondegenerate. Then the bilinear form corresponding to the morphism 
$\boldbup^{ss}:\cV^{ss}\otimes \cV^{ss}\rightarrow \FF$ in $\sVec$ is 
super-symmetric and nondegenerate, and the orthosymplectic Lie algebra 
$\frosp(\cV^{ss},\boldbup^{ss})$ is isomorphic to the semisimplification 
$\frso(\cV,\boldbup)^{ss}$.
\end{itemize}
\end{proposition}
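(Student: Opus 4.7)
My plan is to treat everything categorically and to invoke the fact that both the semisimplification functor $S:\Repe\to\Ver$ of \eqref{eq:S} and the equivalence $F:\sVec\to\Ver$ of \eqref{eq:F} are $\FF$-linear braided monoidal functors between rigid symmetric tensor categories. As a consequence, they preserve duals up to canonical isomorphism, internal endomorphism objects with their composition, and the symmetric/antisymmetric decomposition of $\cV\otimes\cV$ induced by the braiding.

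For the first bullet, I would start from the canonical isomorphism of associative algebras $\End_\FF(\cV)\cong \cV\otimes\cV^*$ in $\Repe$, whose multiplication is expressed through the evaluation $\cV^*\otimes\cV\to\FF$. Applying $S$ and chaining with the natural isomorphisms $S(\cV\otimes\cV^*)\cong S(\cV)\otimes S(\cV^*)\cong S(\cV)\otimes S(\cV)^*$ yields an isomorphism of associative algebras $S(\End_\FF(\cV))\cong \End_\Ver(S(\cV))$ in $\Ver$. Transporting through $F^{-1}$ produces the desired identification $\bigl(\End_\FF(\cV)\bigr)^{ss}\cong \End_\FF(\cV^{ss})$ of associative superalgebras.

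For the second bullet, I would first note that supersymmetry and nondegeneracy of $\boldbup^{ss}$ are formal: the symmetry $\boldbup\circ c_{\cV,\cV}=\boldbup$ is preserved because $S$ and $F^{-1}$ are braided, and since the braiding in $\sVec$ is the parity swap, this property is precisely supersymmetry (a direct check via Recipe \ref{recipe2} and Lemma \ref{le:norm}(3) gives the hands-on verification $\nup(x\subuno,y\subuno)=-\nup(y\subuno,x\subuno)$); nondegeneracy says that the induced morphism $\cV\to\cV^*$ is an isomorphism, a property preserved by $S$ and $F^{-1}$ since they send two-sided inverses to two-sided inverses. To identify the Lie algebras, I would use $\boldbup$ to get an isomorphism $\End_\FF(\cV)\cong \cV\otimes\cV$ in $\Repe$ under which $\frso(\cV,\boldbup)$ corresponds to the antisymmetric part $\Lambda^2\cV=\im\bigl(\tfrac12(\id-c_{\cV,\cV})\bigr)$, with Lie bracket inherited from the commutator in $\End_\FF(\cV)$ (a standard fact in any symmetric tensor category of characteristic not $2$). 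Because the idempotent $\tfrac12(\id-c_{\cV,\cV})$ is $\FF$-linear and purely categorical, it is preserved by $S$, giving $S(\frso(\cV,\boldbup))\cong \Lambda^2_\Ver S(\cV)$ as a Lie subalgebra of $\End_\Ver(S(\cV))$. Finally, transporting through $F^{-1}$, the exterior square $\Lambda^2_\Ver S(\cV)$ becomes the super-exterior square $\Lambda^2_{\sVec} \cV^{ss}$ in $\sVec$, which via the analogous identification $\End_\FF(\cV^{ss})\cong \cV^{ss}\otimes \cV^{ss}$ using $\boldbup^{ss}$ is exactly $\frosp(\cV^{ss},\boldbup^{ss})$.

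The main obstacle I anticipate is keeping the Lie brackets straight throughout: one has to verify that the subobject inclusion $\frso(\cV,\boldbup)\hookrightarrow \End_\FF(\cV)$ is a morphism in $\Repe$ (immediate, since $\sigma$ preserves the skew condition) and that its semisimplification coincides with the inclusion of $\frosp(\cV^{ss},\boldbup^{ss})$ into $\End_\FF(\cV^{ss})$. This reduces, via the first bullet, to the fact that the commutator is a categorical construction inside any associative algebra in a symmetric tensor category and is therefore transported faithfully by the braided monoidal functors $S$ and $F^{-1}$.
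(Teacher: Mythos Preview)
Your proposal is correct and follows essentially the same route as the paper: realize $\End_\FF(\cV)$ as $\cV\otimes\cV^*$ (or $\cV\otimes\cV$ via $\boldbup$), identify $\frso(\cV,\boldbup)$ with the image of the idempotent $\tfrac12(\id-c_{\cV,\cV})$, and transport everything through the braided monoidal functors $S$ and $F$. The only minor remark is that your appeal to Lemma~\ref{le:norm}(3) is formally stated for composition algebras, but its proof uses only the $\sigma$-invariance of the symmetric form, so the citation is harmless.
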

\begin{proof}
Note first that  $\End_\FF(\cV)$ is isomorphic to $\cV\otimes \cV^{*}$ as objects in $\Repe$, 
where the element $v\otimes f$ corresponds to the endomorphism $w\mapsto vf(w)$, for $v,w\in\cV$ and $f\in\cV^*$. The multiplication in $\cV\otimes\cV^*$ is given by 
the following composition (associative and unitor morphisms are omitted, as usual) 
involving the evaluation morphism 
$\mathrm{ev}_{\cV}:\cV^*\otimes\cV\rightarrow \FF$:
\[
\cV\otimes\cV^*\otimes\cV\otimes\cV^*\xrightarrow{\id_\cV\otimes 
\mathrm{ev}_{\cV}\otimes\id_{\cV^*}}\cV\otimes\cV^*.
\]
The first part follows at once because the semisimplification functor $S$ in \eqref{eq:S} is a braided  monoidal functor (see \cite[Definition 8.1.7]{EGNO}), and the equivalence $F$ in \eqref{eq:F} is a braided  monoidal equivalence.

For the second part, the symmetry of $\boldbup^{ss}$ in $\sVec$ (that is, the fact that $\boldbup^{ss}$ is super-symmetric) and its nondegeneracy are again consequences of the fact that $S$ and $F$ are braided  monoidal functors. In this case, the algebra 
 $\End_\FF(\cV)$ is isomorphic to $\cV\otimes\cV$, where the element $v\otimes w$ corresponds to the linear map $x\mapsto v\boldbup(w,x)$, and the multiplication is given by the composition
\[
\cV\otimes\cV\otimes\cV\otimes\cV\xrightarrow{\id_\cV\otimes \boldbup\otimes\id_{\cV}}\cV\otimes\cV.
\]
The corresponding orthogonal Lie algebra $\frso(\cV,\boldbup)$ corresponds to the subspace $\Skew^2(\cV\otimes\cV)$ of skew-symmetric tensors, which is the image of the projection 
$P=\frac{1}{2}(\id_{\cV\otimes\cV}-c_{\cV,\cV})\in\End_{\Repe}(\cV\otimes\cV)$. As usual, 
$c_{\cV,\cV}:\cV\otimes\cV\rightarrow \cV\otimes\cV$ is the braiding (the usual swap in this case).

Since $S$ and $F$ are braided  monoidal functors, the semisimplification 
 $\frso(\cV,\boldbup)^{ss}$ is isomorphic to the image of the projection 
$\frac{1}{2}(\id_{\cV^{ss}\otimes\cV^{ss}}-c^{ss}_{\cV^{ss},\cV^{ss}})$, where now the braiding $c^{ss}_{\cV^{ss},\cV^{ss}}$ is given by the parity swap. This is the subspace of super-skew-symmetric tensors in $\cV^{ss}\otimes\cV^{ss}$, and this, in turn, is isomorphic to the orthosymplectic Lie superalgebra 
$\frosp(\cV^{ss},\boldbup^{ss})$.
\end{proof}

\smallskip

\subsection{Derivations}\label{ss:derivations}

As mentioned at the beginning of the section, derivations present a different behavior under semisimplification. Note that any automorphism $\tau$ of an algebra 
$(\cA,\mu)$ induces an automorphism $\Ad_\tau: d\mapsto \tau\circ d\circ \tau^{-1}$ in its Lie algebra of derivations.

To begin with, given a Lie algebra 
$(\cL,\mu_\cL)$,  an algebra $(\cA,\mu_\cA)$ in 
$\Repe$, and a morphism  $\Phi:\cL\otimes\cA\rightarrow\cA$ in $\Repe$ given by 
$x\otimes a\mapsto x.a$; $\Phi$ is an action by derivations of $\cL$ on $\cA$ if and only if
the following two conditions are satisfied for all $x,y\in\cL$ and $a,b\in\cA$:
\[
[x,y].a=x.(y.a)-y.(x.a),\quad
x.(ab)=(x.a)b+a(x.b),
\]
where $[x,y]=\mu_\cL(x\otimes y)$ and $ab=\mu_{\cA}(a\otimes b)$. This can be written as follows:
\[
\begin{split}
&\Phi\circ(\mu_{\cL}\otimes\id_{\cA})
=\Phi\circ(\id_{\cL}\otimes\Phi)\circ 
    \bigl(\id_{\cL\otimes\cL\otimes\cA} -c_{\cL,\cL}\otimes\id_\cA\bigr)\\
&\Phi\circ(\id_\cL\otimes\mu_{\cA})
=\mu_{\cA}\circ\bigl(\Phi\otimes\id_{\cA}+
    (\id_{\cA}\otimes\Phi)\circ(c_{\cL,\cA}\otimes\id_{\cA})\bigr)
\end{split}
\]
where the first equality holds in 
$\Hom_{\Repe}(\cL\otimes\cL\otimes\cA,\cA)$, while the second holds in
$\Hom_{\Repe}(\cL\otimes\cA\otimes\cA,\cA)$,
and therefore all this goes smoothly under semisimplification.

As a consequence, we obtain our next result:

\begin{proposition}\label{pr:der}
For any algebra $(\cA,\mu)$ in $\Repe$, there is a natural homomorphism 
$\Der(\cA,\mu)^{ss}\rightarrow \Der(\cA^{ss},\mu^{ss})$ from the semisimplification of the Lie algebra of derivations of $(\cA,\mu)$ into the Lie superalgebra of derivations of the superalgebra $(\cA^{ss},\mu^{ss})$.
\end{proposition}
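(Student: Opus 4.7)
The plan is to view the evaluation map $\Phi\colon\Der(\cA,\mu)\otimes\cA\to\cA$, $d\otimes a\mapsto d(a)$, as a morphism in $\Repe$ and then transport everything through semisimplification. The starting observation is that, because $\sigma$ is an automorphism of $(\cA,\mu)$, conjugation $\Ad_\sigma\colon d\mapsto \sigma\circ d\circ\sigma^{-1}$ preserves $\Der(\cA,\mu)\subseteq\End_\FF(\cA)$, so $\Der(\cA,\mu)$ is naturally a Lie algebra in $\Repe$ and $\Phi$ is $\sigma$-equivariant. The crucial point, already spelled out in the excerpt just before the statement, is that the two identities expressing that $\Phi$ is a Lie action by derivations are equalities of morphisms in $\Hom_{\Repe}(\cL\otimes\cL\otimes\cA,\cA)$ and $\Hom_{\Repe}(\cL\otimes\cA\otimes\cA,\cA)$ that involve only the structural morphisms $\mu_{\cL}$, $\mu_{\cA}$, $\Phi$, and the symmetric braiding $c$; in particular, the derivation axiom is purely categorical.

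Next, I would apply the $\FF$-linear braided monoidal functor $S$ of \eqref{eq:S} and then a quasi-inverse of the braided monoidal equivalence $F$ of \eqref{eq:F}. Since both $S$ and $F$ strictly preserve the tensor product and the braiding, the two categorical identities above are transported verbatim, the only difference being that the symmetric swap in $\Repe$ is replaced by the parity swap in $\sVec$. This produces a morphism $\Phi^{ss}\colon \Der(\cA,\mu)^{ss}\otimes \cA^{ss}\to \cA^{ss}$ in $\sVec$ satisfying precisely the axioms for an action of the Lie superalgebra $\Der(\cA,\mu)^{ss}$ on the superalgebra $\cA^{ss}$ by \emph{super}derivations (the sign $(-1)^{|d||a|}$ in the super-Leibniz rule is exactly what the parity swap introduces). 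By the defining adjunction for the derivation superalgebra, such an action is the same datum as a Lie superalgebra homomorphism $\Der(\cA,\mu)^{ss}\to\Der(\cA^{ss},\mu^{ss})$, and functoriality of the whole construction delivers naturality with respect to $\sigma$-equivariant homomorphisms of algebras in $\Repe$.

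The main difficulty is conceptual rather than computational: one must resist the temptation to claim an isomorphism, unlike in Proposition \ref{pr:endos_skew}. The homomorphism so produced is in general neither injective nor surjective, because the summand of $\Der(\cA,\mu)$ built from copies of $V_2$ is annihilated by semisimplification, while conversely new superderivations of $\cA^{ss}$ can appear that do not lift to honest derivations of $\cA$. Thus only the existence of a natural Lie superalgebra homomorphism can be asserted at this level of generality, which is exactly what the statement claims.
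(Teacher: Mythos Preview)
Your proposal is correct and follows essentially the same approach as the paper: the argument is precisely the observation, spelled out in the paragraph preceding the statement, that the Lie-action and Leibniz identities for $\Phi:\Der(\cA,\mu)\otimes\cA\to\cA$ are equalities of morphisms built from $\mu_\cL$, $\mu_\cA$, $\Phi$ and the braiding, and hence are preserved by the braided monoidal functors $S$ and $F^{-1}$. Your additional remarks on why only a homomorphism (not an isomorphism) results anticipate the explicit examples worked out immediately after the proposition; the only quibble is that $F$ is monoidal via the coherence isomorphism $J$ rather than ``strictly'', but this does not affect the argument.
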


We will compute next the semisimplification of the Lie algebras of derivations of the algebras $(\cC,\mu,\boldnup)$ in cases (1), (3) and (4) of subsection \ref{ss:semiCayley}. As in subsection \ref{ss:semiCayley}, the situation in case (2)
is quite trivial.

Take the canonical basis of the split Cayley algebra $\cC$ as in Table \ref{ta:good_basis}, and write $\cU=\FF u_1+\FF u_2+\FF u_3$, 
$\cV=\FF v_1+\FF v_2+\FF v_3$. The characteristic of $\FF$ being $3$ implies that the Lie algebra of derivations $\Der(\cC)$ splits as
(see \cite[Proposition 4.29]{EKmon})
\begin{equation}\label{eq:SUV}
\Der(\cC)=\cS\oplus \ad_\cU\oplus \ad_\cV,
\end{equation}
where, as usual, $\ad_x(y)=[x,y]$, and where $\cS=\{d\in\Der(\cC)\mid d(e_1)=0=d(e_2)\}$. 

At this point, it should be remarked that, the characteristic being $3$, 
$\Der(\cC)$ is not the contragredient
Lie algebra attached to the Cartan matrix 
$\left(\begin{smallmatrix} 2&-3\\ -1&2\end{smallmatrix}\right)$. (See 
\cite[Example 3.4]{Kannan}.) This contragredient Lie algebra is, in fact, a subalgebra of $\Der(\cC)$ given by $\cS'\oplus\ad_\cU\oplus \ad_\cV$, with $\cS'$ a subalgebra of $\cS$ isomorphic to $\frgl_2(\FF)$. 

Moreover, the restriction of $\cS$ to $\cU$ gives an isomorphism  $\cS\simeq\frsl(\cU)$. The subspace $\ad_\cC$ is a seven-dimensional ideal of 
$\Der(\cC)$ isomorphic to the projective special linear Lie algebra 
$\frpsl_3(\FF)$, and the quotient $\Der(\cC)/\ad_\cC$ is again isomorphic to 
$\frpsl_3(\FF)$. Under the isomorphism $\cS\simeq\frsl(\cU)$, any trace zero endomorphism $f$ of $\cU$ acts trivially on $\FF e_1+\FF e_2$, as $f$ on $\cU$, and as $-f^*$ on $\cV$, where $f^*$ is determined by the equation 
$\boldnup\bigl(f(u),v\bigr)=\boldnup\bigl(u,f^*(v)\bigr)$ for all 
$u\in \cU$ and $v\in\cV$. We will identify $\cS$ with $\frsl(\cU)$ and will denote by $E_{ij}$ the 
linear endomorphism of 
$\cU$ taking $u_j$ to $u_i$ and sending $u_{l}$ to $0$ for $l\neq j$. In particular, $\ad_{e_1-e_2}$ is identified with twice the identity map 
$I_3= E_{11}+E_{22}+E_{33}$.

\begin{itemize}
\item In case (1) of subsection \ref{ss:semiCayley}, and because $\Ad_\sigma(\ad_x)=\ad_{\sigma(x)}$ for all $x$, it is easy to compute a splitting of $\Der(\cC)$ into Jordan blocks relative to the nilpotent transformation 
$\Delta\bydef\Ad_\sigma -\id$, as follows:
\[
\begin{split}
&E_{32}\xrightarrow{\Delta} E_{22}-E_{33}-E_{23} \xrightarrow{\Delta} E_{23}                
      \xrightarrow{\Delta} 0,\\
& E_{12} \xrightarrow{\Delta} -E_{13} \xrightarrow{\Delta} 0,\\
& E_{31} \xrightarrow{\Delta} E_{21} \xrightarrow{\Delta} 0,\\
& \ad_{u_3} \xrightarrow{\Delta} \ad_{u_2} \xrightarrow{\Delta} 0,\\
& \ad_{v_2} \xrightarrow{\Delta} -\ad_{v_3} \xrightarrow{\Delta} 0,\\
& \ad_{e_1-e_2},\ad_{u_1},\ad_{v_1} \xrightarrow{\Delta} 0.
\end{split}
\]
Therefore we get a splitting $\Der(\cC)=D\subo\oplus D\subuno
\oplus \Delta(D\subuno)\oplus D_2$ as in \eqref{eq:C0C1C1C2} with 
$D\subo=\ad_{\FF(e_1-e_2)+\FF u_1+\FF v_1}$ and 
$D\subuno=\FF \ad_{u_3}\oplus \FF\ad_{v_2}\oplus \FF E_{12}\oplus\FF E_{31}$.

Recipe \ref{recipe} gives the multiplication in the Lie superalgebra 
$\Der(\cC)^{ss}=D\subo\oplus D\subuno$. The subspace 
$D\subo\oplus \FF \ad_{u_3}\oplus\FF\ad_{v_2}$ is an ideal isomorphic to 
$\frosp(1,2)$. 

Moreover, the action of $\Der(\cC)^{ss}$ on $\cC^{ss}=C\subo\oplus C\subuno$ is determined by its action on the odd part (as the odd part generates the whole superalgebra, see \eqref{eq:B42}). Using Recipe \ref{recipe1} we obtain
\[
\begin{split}
\ad_{u_3}\cdot u_3&=\ad_{u_3}(\delta(u_3))=[u_3,u_2]=v_1,\\
\ad_{u_3}\cdot v_2&=\ad_{u_3}(\delta(v_2))=[u_3,-v_3]=e_1-e_2,\\
\ad_{v_2}\cdot u_3&=\ad_{v_2}(\delta(u_3))=[v_2,u_2]=e_1-e_2,\\
\ad_{v_2}\cdot v_2&=\ad_{v_2}(\delta(v_2))=[v_2,-v_3]=u_1,\\
E_{12}\cdot u_3&=E_{12}(u_2)=u_1,\\
E_{12}\cdot v_2&=-E_{12}(v_3)=0,\\
E_{31}\cdot u_3&=0,\\
E_{31}\cdot v_2&=-E_{31}(v_3)=v_1.
\end{split}
\]
It turns out that $\Der(\cC)^{ss}=D\subo\oplus D\subuno$ is isomorphic to the Lie superalgebra $\Der(\cC^{ss})$ (see \cite[Theorem 5.8]{EldOkubo}).

\medskip

\item In case (3) of subsection \ref{ss:semiCayley}, $\cC=\cQ\oplus \cQ^\perp$, where  $\cQ$ is the quaternion subalgebra spanned by $e_1,e_2,u_3,v_3$ and 
$\cQ^\perp$ (orthogonal subspace relative to the norm) is spanned by $u_1,u_2,v_1,v_2$. This is a $\ZZ/2$-grading of $\cC$ that induces a $\ZZ/2$-grading of $\Der(\cC)$ whose even component is $\{d\in\Der(\cC)\mid d(\cQ)\subset \cQ\}$ is the span (with the notation used in the previous case) of $E_{12},E_{21},E_{11}-E_{22},\ad_{e_1-e_2},\ad_{u_3},\ad_{v_3}$. The derivations $E_{12},E_{21},E_{11}-E_{22}$ are all fixed by $\Ad_\sigma$, while the remaining three elements form a three-dimensional indecomposable module for the action of $\Ad_\sigma$ (i.e., 
isomorphic to $V_2$ in $\Repe$). The odd part
decomposes into the direct sum of the following Jordan blocks
for the linear endomorphism $\Delta=\Ad_\sigma-\id$ of $\Der(\cC)$:
\[
\begin{split}
&E_{31} \xrightarrow{\Delta} -E_{23}+\ad_{u_2}-\ad_{v_1} 
            \xrightarrow{\Delta} -\ad_{u_2}\xrightarrow{\Delta} 0,\\
&E_{32}\xrightarrow{\Delta} E_{13}-\ad_{u_1}-\ad_{u_2} 
        \xrightarrow{\Delta} \ad_{u_1} \xrightarrow{\Delta} 0,\\
& E_{13} \xrightarrow{\Delta} 0,\quad  E_{23} \xrightarrow{\Delta} 0.
\end{split}
\]
Therefore we get a splitting $\Der(\cC)=D\subo\oplus D\subuno\oplus
\Delta(D\subuno)\oplus D_2$ as in \eqref{eq:C0C1C1C2}, with $D\subuno=0$ as there are no Jordan blocks of length $2$, and
$D\subo=\espan{E_{12},E_{21},E_{11}-E_{22},E_{13},E_{23}}$. It turns out that
$\Der(\cC)^{ss}$ is a Lie algebra (its odd part is trivial) of dimension $5$, which is the direct sum of a copy of $\frsl_2(\FF)$ and a two-dimensional abelian ideal: 
$\FF E_{13}+\FF E_{23}$. This ideal is the natural two-dimensional module for the copy of $\frsl_2(\FF)$.

By \cite[proof of Lemma 5.3]{EldOkubo}, $\Der(\cC^{ss})$ is isomorphic to 
$\frsl_2(\FF)$. Actually, it turns out that the ideal $\FF E_{13}+\FF E_{23}$ in 
$\Der(\cC)^{ss}$ acts trivially on $\cC^{ss}$. (Recall that the action is given by Recipe \ref{recipe1}.) In this case, the natural homomorphism in $\sVec$ from 
$\Der(\cC)^{ss}$ into $\Der(\cC^{ss})$ is surjective.

\item In case (4) of subsection \ref{ss:semiCayley}, lengthy but straightforward computations give the following Jordan blocks for the action of the nilpotent 
endomorphism $\Delta=\Ad_\sigma-\id$:

\[
\begin{split}
&E_{32} \xrightarrow{\Delta} 
     E_{22}-E_{33}-E_{23}+E_{13}-\ad_{u_1}-\ad_{v_2}+\ad_{v_3}\\[-2pt]
  &\hspace*{160pt}\xrightarrow{\Delta} E_{23}+\ad_{u_1}-\ad_{v_3} \xrightarrow{\Delta} 0,\\
& E_{31} \xrightarrow{\Delta} E_{21}-E_{23}+\ad_{u_2}-\ad_{v_1}
   \xrightarrow{\Delta} -\ad_{u_2} \xrightarrow{\Delta} 0,\\
&\ad_{u_3} \xrightarrow{\Delta} I_3 +\ad_{u_2}+\ad_{v_3}
   \xrightarrow{\Delta} -\ad_{v_3} \xrightarrow{\Delta} 0,\\
& E_{12} \xrightarrow{\Delta} -E_{13} \xrightarrow{\Delta} 0,\\
& \ad_{v_2} \xrightarrow{\Delta} -\ad_{v_3}-\ad_{u_1} \xrightarrow{\Delta} 0,\\
& E_{21} \xrightarrow{\Delta} 0.
\end{split}
\]
Therefore we get a splitting $\Der(\cC)=D\subo\oplus D\subuno\oplus
\Delta(D\subuno)\oplus D_2$ as in \eqref{eq:C0C1C1C2}, with $D\subo=\FF E_{21}$, 
$D\subuno=\FF E_{12}+\FF \ad_{v_2}$, $\Delta(D\subuno)=\FF E_{13}+\FF\ad_{u_1+v_3}$, and $D_2$ the linear span of $\ad_{u_2},\ad_{v_3},\ad_{u_3},I_3,E_{31},E_{32},
E_{23}+\ad_{u_1},E_{21}-E_{23}-\ad_{v_1},(E_{22}-E_{33})+E_{13}-\ad_{v_2}$.
 In consequence, we may take $\Der(\cC)^{ss}=D\subo\oplus D\subuno$, and
Recipe \ref{recipe} gives that  in $\Der(\cC)^{ss}$, 
\[
\begin{split}
\null\qquad\qquad [E_{21},E_{12}]&=\mathrm{proj}_{D\subuno}(E_{22}-E_{11})=
      -\mathrm{proj}_{D\subuno}(I_3+E_{22}-E_{33}),\\
     &= -\mathrm{proj}_{D\subuno}(E_{22}-E_{33}),\quad\text{as $I_3\in D_2$,}\\
     &= \mathrm{proj}_{D\subuno}(E_{13}-\ad_{v_2}),\quad\text{as 
                                    $(E_{22}-E_{33})+E_{13}-\ad_{v_2}\in D_2$,}\\
     & =-\ad_{v_2},\quad\text{as $E_{13}\in \Delta(D\subuno)$,}
\end{split}
\]
and all the other brackets are trivial. 

The action of $\Der(\cC)^{ss}$ on $\cC^{ss}=\FF 1+\FF v_1+\FF v_2$ is given by Recipe \ref{recipe1}: $E_{21}.v_2=-v_1$, and all the other products are trivial.

In this case, the kernel of the natural homomorphism in $\sVec$ from 
$\Der(\cC)^{ss}$ into $\Der(\cC^{ss})$ is $D\subuno$, and this homomorphism is neither injective nor surjective.

\end{itemize}

\bigskip

\section{The extended Freudenthal Magic Square}\label{se:extended}

Assume for a while that the characteristic of the ground field $\FF$ is just different from $2$.

Different authors \cite{Vinberg,BS,LM} have considered several symmetric constructions of Freudenthal's Magic square in terms of two unital composition algebras. We will follow here \cite{EldRMI}, but restricted, for simplicity,
to the use of the so-called \emph{para-Hurwitz algebras}. Let $(\cC,\mu,\boldnup)$
and $(\cC',\mu',\boldnup')$ be two unital composition algebras over a field $\FF$
of characteristic not $2$. Denote in both cases the multiplication by juxtaposition, 
and consider the new multiplications $\bar\mu$ given by $\bar\mu(x,y)=x\bullet y\bydef \overline{x}\,\overline{y}$, where $\overline{x}=\boldnup(x,1)-x$ is the standard conjugation. Define similarly $\bar\mu'$. Consider the associated 
\emph{triality Lie algebras}:
\[
\tri(\cC,\bullet,\boldnup)\bydef\{(d_0,d_1,d_2)\in\frso(\cC,\boldnup)^3\mid
d_0(x\bullet y)=d_1(x)\bullet y+x\bullet d_2(y)\ \forall x,y\in\cC\}
\]
and similarly for $\tri(\cC',\bullet,\boldnup')$. These are Lie algebras
with bracket given componentwise, satisfying that the cyclic permutation
 \begin{equation}\label{eq:theta}
\theta:(d_0,d_1,d_2)\mapsto (d_2,d_0,d_1)
\end{equation}%
is a Lie algebra automorphism (\emph{triality automorphism}). Denote by $\theta'$ the corresponding automorphism of $\tri(\cC',\bullet,\boldnup')$. If $\cC$ is a Cayley algebra, then the projection of $\tri(\cC,\bullet,\boldnup)$ on any of its components gives an isomorphism 
$\tri(\cC,\bullet,\boldnup)\simeq \frso(\cC,\boldnup)$.

For simplicity, we will just write $\tri(\cC)$ and $\tri(\cC')$.

The vector space
\[
\frg=\frg(\cC,\cC')=\bigl(\tri(\cC)\oplus
\tri(\cC')\bigr)\oplus\Bigl(\oplus_{i=0}^2\iota_i(\cC\otimes \cC')\Bigr),
\]
where $\iota_i(\cC\otimes \cC')$ is just a copy of $\cC\otimes \cC'$ ($i=0,1,2$) becomes a Lie algebra with the bracket defined as follows:

\begin{itemize}
\item the Lie bracket in $\tri(\cC)\oplus\tri(\cC')$, which thus becomes  a Lie subalgebra of $\frg$,

\item $[(d_0,d_1,d_2),\iota_i(x\otimes
 x')]=\iota_i\bigl(d_i(x)\otimes x'\bigr)$,

\item
 $[(d_0',d_1',d_2'),\iota_i(x\otimes
 x')]=\iota_i\bigl(x\otimes d_i'(x')\bigr)$,

\item $[\iota_i(x\otimes x'),\iota_{i+1}(y\otimes y')]=
 \iota_{i+2}\bigl((x\bullet y)\otimes (x'\bullet y')\bigr)$ (indices modulo
 $3$),

\item $[\iota_i(x\otimes x'),\iota_i(y\otimes y')]=
 \boldnup'(x',y')\theta^i(t_{x,y})+
 \boldnup(x,y)\theta'^i(t'_{x',y'})\in\tri(\cC)\oplus\tri(\cC')$,
\end{itemize}
for $(d_0,d_1,d_2)\in\tri(\cC)$, $(d_0',d_1',d_2')\in\tri(\cC')$, $x,y\in\cC$,
and $x',y'\in\cC'$,
where $t_{x,y}$ is the element of $\tri(\cC)$ defined as follows:
\begin{equation}\label{eq:txy}
t_{x,y}\bydef\Bigl(s_{x,y},
\frac{1}{2}\bigl(r_yl_x-r_xl_y\bigr),\frac{1}{2}\bigl(l_yr_x-l_xr_y\bigr)\Bigr)
\end{equation}
with $s_{x,y}:z\mapsto \boldnup(x,z)y-\boldnup(y,z)x$, $l_x:z\mapsto x\bullet z$, and $r_x:z\mapsto z\bullet x$, and similarly for $\cC'$.

The Lie algebras thus obtained are semisimple (simple in most cases) and, if the characteristic of the ground field $\FF$ is neither $2$ nor $3$ then the type of the Lie algebras obtained is given by Freudenthal Magic Square, where the index over each row (respectively column) is the dimension of $\cC$ (resp. $\cC'$):
\[
\vbox{\offinterlineskip
 \halign{\hfil\ $#$\quad \hfil&%
 \vreglon #%
 &\hfil\quad $#$\quad \hfil&\hfil$#$\hfil
 &\hfil\quad $#$\quad \hfil&\hfil\quad $#$\quad \hfil\cr
 \bigstrut &width 0pt&1&2&4&8\cr
 &\multispan5{\hreglonfill}\cr
 1&&A_1&A_2&C_3&F_4\cr
 \bigstrut 2&&A_2 &A_2\oplus A_2&A_5&E_6\cr
 \bigstrut 4&&C_3&A_5 &D_6&E_7\cr
 \bigstrut 8&&F_4&E_6& E_7&E_8\cr}}
\]
If the characteristic of the ground field $\FF$ is $3$, instead of simple Lie algebras of type $A_2$ or $A_5$ we obtain forms of the projective general Lie algebra $\frpgl_3(\FF)$ or $\frpgl_6(\FF)$, and instead of simple Lie algebras of type $E_6$, we obtain Lie algebras of dimension $78$ whose derived ideal is simple of type $E_6$ (the simple Lie algebra of type $E_6$ has dimension $77$ in characteristic $3$).

 If $(\cC,\mu,\boldnup)$ is a Cayley algebra, then the projection 
$\pi_0:(d_0,d_1,d_2)\mapsto d_0$, gives a Lie algebra isomorphism $\tri(\cC,\bullet,\boldnup)\simeq\frso(\cC,\boldnup)$. In other words, for any $d_0\in\frso(\cC,\boldnup)$ there are unique $d_1,d_2\in\frso(\cC,\boldnup)$ such that $(d_0,d_1,d_2)$ lies in $\tri(\cC,\bullet,\boldnup)$. Hence the triples
$t_{x,y}$ in \eqref{eq:txy} span the triality Lie algebra $\tri(\cC,\bullet,\boldnup)$. 

Therefore,  
the linear map $\vartheta:\frso(\cC,\boldnup)\rightarrow \frso(\cC,\boldnup)$,
given by 
\begin{equation}\label{eq:vartheta}
\vartheta(s_{x,y})=\frac{1}{2}\bigl(l_yr_x-l_xr_y\bigr),
\end{equation}
is a Lie algebra automorphism that makes
the following diagram commutative ($\theta$ as in \eqref{eq:theta}):
\begin{equation}\label{eq:tri_theta}
\begin{tikzcd}
\tri(\cC,\bullet,\boldnup)\arrow[r, "\theta"]\arrow[d,"\pi_0"']&\tri(\cC,\bullet,\boldnup)\arrow[d,"\pi_0"]\\
\frso(\cC,\boldnup)\arrow[r,"\vartheta"]&\frso(\cC,\boldnup).
\end{tikzcd}
\end{equation}
Hence we also have
\begin{equation}\label{eq:vartheta2}
\vartheta^2(s_{x,y})=\frac{1}{2}\bigl(r_yl_x-r_xl_y\bigr).
\end{equation}

The natural and the two half-spin actions of $\frso(\cC,\boldnup)$ are involved in the Lie bracket of 
$\frg(\cC,\cC')$. The natural action $\Phi_0$ is given by the composition
\[
\frso(\cC,\boldnup)\otimes\cC\xrightarrow{\sim\otimes\id_\cC}\Skew^2(\cC,\boldnup)\otimes\cC
\xrightarrow{\id_{\cC}\otimes\boldnup}\cC,
\]
where $\frso(\cC,\boldnup)$ is identified with $\Skew^2(\cC,\boldnup)$ as in Section \ref{ss:skew}. This composition behaves as follows:
\[
s_{x,y}\otimes z\mapsto (-x\otimes y+y\otimes x)\otimes z
\mapsto -\boldnup(y,z)x+\boldnup(x,z)y= s_{x,y}(z).
\]
The two half-spin representations $\Phi_1$ and $\Phi_2$ are respectively the compositions:
\[
\frso(\cC,\boldnup)\otimes\cC\xrightarrow{\sim\otimes\id_\cC}\Skew^2(\cC,\boldnup)\otimes\cC
\xrightarrow{\id_\cC\otimes\text{`swap'}} \cC\otimes\cC\otimes\cC
\xrightarrow{-\frac{1}{2}\bar\mu\circ(\bar\mu\otimes \id_\cC)}\cC,
\]
given by
\begin{multline*}
s_{x,y}\otimes z\mapsto(-x\otimes y+y\otimes x)\otimes z
\mapsto -x\otimes z\otimes y+y\otimes z\otimes x\\
\mapsto \frac{1}{2}\bigl((x\bullet z)\bullet y-(y\bullet z)\bullet x\bigr)
=\frac{1}{2}\bigl(r_yl_x-r_xl_y\bigr)(z),
\end{multline*}
and 
\[
\frso(\cC,\boldnup)\otimes\cC\xrightarrow{\sim\otimes\id_\cC}\Skew^2(\cC,\boldnup)\otimes\cC
\xrightarrow{\id_\cC\otimes\text{`swap'}} \cC\otimes\cC\otimes\cC
\xrightarrow{\frac{1}{2}\bar\mu\circ(\id_{\cC}\otimes \bar\mu)}\cC,
\]
given by
\begin{multline*}
s_{x,y}\otimes z\mapsto(-x\otimes y+y\otimes x)\otimes z
\mapsto -x\otimes z\otimes y+y\otimes z\otimes x\\
\mapsto \frac{1}{2}\bigl(-x\bullet (z\bullet y)+y\bullet (z\bullet x)\bigr)
=\frac{1}{2}\bigl(l_yr_x-l_xr_y\bigr)(z).
\end{multline*}

The commutativity of \eqref{eq:tri_theta} is then equivalent to the commutativity of the following diagram:
\begin{equation}\label{eq:Phi_vartheta}
\begin{tikzcd}
\frso(\cC,\boldnup)\otimes\cC \arrow[r, "\Phi_0"] \arrow[d, "\vartheta^i\otimes\id_\cC"'] 
& \cC\arrow[d, "\id_\cC"] \\
\frso(\cC,\boldnup)\otimes\cC \arrow[r, "\Phi_i"]  
& \cC
\end{tikzcd}
\end{equation}
for $i=1,2$. Note that all the homomorphisms above are given in terms of the norm $\boldnup$, the multiplication $\bar\mu$ and the braiding (the `swap').

This symmetric construction of Freudenthal's Magic Square was extended, over fields of characteristic $3$, by using the unital composition superalgebras $B(4,2)$ and $B(1,2)$ in \cite{CunhaElduque}, thus obtaining an extended Freudenthal's Magic Square that includes many of the exceptional contragredient simple Lie superalgebras in characteristic $3$. As before, in the second row or column, the superalgebras obtained are no longer simple, but their derived subalgebras are simple.

All these Lie superalgebras have been obtained by Kannan \cite{Kannan} by considering nilpotent derivations of degree $3$ of some of the simple exceptional Lie algebras, and thus looking at these as Lie algebras in the category $\Repa$, whose semisimplification is again $\Ver$.

 Actually, the semisimplification of Cayley algebras in Section \ref{se:octonions} provides a
bridge between the symmetric construction of Freudenthal's Magic Square and the extended square in \cite{CunhaElduque}. 

Assume from now on that the characteristic of our ground field is $3$.

Any order $3$ automorphism of a unital composition algebra $(\cC,\mu,\boldnup)$ is also an automorphism of its para-Hurwitz counterpart, and then it induces an order $3$ automorphism of $\frso(\cC,\boldnup)$ and of $\tri(\cC,\bullet,\boldnup)$ commuting with the triality automorphism.

Therefore, starting with an order $3$ automorphism $\sigma$ of a Cayley algebra 
$(\cC,\mu,\boldnup)$ such that its semisimplification is isomorphic to either $B(1,2)$ or $B(4,2)$, there is an order $3$ automorphism induced in 
$\frg(\cC,\cC')$, where we combine the order $3$ automorphism on $\cC$ and the identity automorphism in $\cC'$. The action of this order $3$ automorphism is as follows:
\begin{itemize}
\item $(d_0,d_1,d_2)\mapsto \bigl(\Ad_\sigma(d_0),\Ad_\sigma(d_1),\Ad_\sigma(d_2)\bigr)$, for $(d_0,d_1,d_2)\in\tri(\cC,\bullet,\boldnup)$, where $\Ad_\sigma(d)=\sigma\circ d\circ\sigma^{-1}$,
\item the action on $\tri(\cC',\bullet,\boldnup')$ is trivial,
\item $\iota_i(x\otimes x')\mapsto \iota_i\bigl(\sigma(x)\otimes x'\bigr)$ for any $i=0,1,2$, 
$x\in\cC$, and $x'\in\cC'$.
\end{itemize}
This allows us to consider $\frg(\cC,\cC')$ as a Lie algebra in $\Repe$.

As any automorphism of $(\cC,\mu,\boldnup)$ commutes with the standard conjugation $x\mapsto
\bar x=\boldnup(1,x)-x$, it turns out that the semisimplification of $(\cC,\bar\mu,\boldnup)$ is the para-Hurwitz superalgebra $(\cC^{ss},\overline{\mu^{ss}},\boldnup^{ss})$. For these superalgebras, the projection $\pi_0:(d_0,d_1,d_2)\mapsto d_0$ is a Lie superalgebra isomorphism 
(see \cite{EldOkubo}). Using Proposition \ref{pr:endos_skew}, we get a chain of isomorphisms of Lie superalgebras:
\[
\tri(\cC,\bar\mu,\boldnup)^{ss}\simeq \frso(\cC,\boldnup)^{ss}\simeq 
\frosp(\cC^{ss},\boldnup^{ss})\simeq\tri(\cC^{ss},\overline{\mu^{ss}},\boldnup^{ss}).
\]
The commutativity of \eqref{eq:tri_theta} shows that, under these isomorphisms, the Lie 
superalgebra automorphism $\theta^{ss}$ of $\tri(\cC^{ss},\overline{\mu^{ss}},\boldnup^{ss})$ 
corresponds to the automorphism $\vartheta^{ss}$ of $\frosp(\cC^{ss},\boldnup^{ss})$, and the 
commutativity of \eqref{eq:Phi_vartheta}, together with the fact that the $\Phi_i$'s are defined in 
terms of $\boldnup$, $\mu$, and the braiding, shows that $\vartheta^{ss}$ satisfies the `super' 
versions of \eqref{eq:vartheta} and \eqref{eq:vartheta2}. But 
$\tri(\cC^{ss},\overline{\mu^{ss}},\boldnup^{ss})$ is spanned by the `super' versions of the triples $t_{x,y}$ in \eqref{eq:txy} (see \cite{EldOkubo}). It follows that, under the isomorphisms above, the automorphism $\theta^{ss}$ of $\tri(\cC,\bar\mu,\boldnup)^{ss}$ corresponds to  the cyclic permutation 
$(d_0,d_1,d_2)\mapsto (d_2,d_0,d_1)$ in $\tri(\cC^{ss},\overline{\mu^{ss}},\boldnup^{ss})$,
 and, as a consequence, that the Lie superalgebra
$\frg(\cC,\cC')^{ss}$ is isomorphic to the superalgebra $\frg(\cC^{ss},\cC')$ in \cite{CunhaElduque}.

The same arguments work if both $\cC$ and $\cC'$ are Cayley algebras endowed with order $3$ automorphisms. We also get an induced order $3$ automorphism of 
$\frg(\cC,\cC')$. These order $3$ automorphisms allow us to see $\frg(\cC,\cC')$ 
as a Lie algebra in $\Repe$ and get its semisimplification $\frg(\cC,\cC')^{ss}$, which is isomorphic to $\frg(\cC^{ss},\cC'^{ss})$.

In other words, the Lie superalgebras in the extended Freudenthal's Magic square
can be obtained by semisimplification of the Lie algebras (in $\Repe$) in the fourth row of the classical Freudenthal's Magic Square in characteristic $3$. 

It must be pointed out here that in \cite{Kannan}, $\frg\bigl(B(1,2),B(1,2)\bigr)$ is obtained from the exceptional Lie algebra of type $E_6$ endowed with a suitable nilpotent derivation of order $3$, while the above comments show that 
$\frg\bigl(B(1,2),B(1,2)\bigr)$ is obtained too from the exceptional Lie algebra of type $E_8$, that is, from the Lie algebra $\frg(\cC,\cC')$ where both $\cC$ and 
$\cC'$ are the split Cayley algebras, endowed with automorphisms of types (3) or (4) in Theorem \ref{th:order3}.


\bigskip

\end{document}